\theoremstyle{plain}
\newtheorem{lem}{Lemma}[section]
\newtheorem{prop}[lem]{Proposition}
\newtheorem{thm}[lem]{Theorem}
\newtheorem{cor}[lem]{Corollary}
\theoremstyle{definition}
\theoremstyle{remark}
\DeclareMathOperator{\cls}{cls}
\DeclareMathOperator{\rank}{rank}
\DeclareMathOperator{\disc}{disc}
\DeclareMathOperator{\sym}{sym}
\DeclareMathOperator{\e}{e}
\DeclareMathOperator{\diag}{diag}
\newcommand{\bmu}{\boldsymbol \mu}
\newcommand{\bnu}{\boldsymbol \nu}
\newcommand{\bdelta}{\boldsymbol \delta}
\newcommand{\bbeta}{\boldsymbol \beta}
\newcommand{\bgamma}{\boldsymbol \gamma}
\newcommand{\calP}{{\mathfrak P}}
\newcommand{\kfld}{{\mathbb K}}
\newcommand{\Z}{\mathbb Z}
\newcommand{\Q}{\mathbb Q}
\newcommand{\Hyp}{\mathbb H}
\newcommand{\F}{\mathbb F}
\newcommand{\G}{\mathcal G}
\newcommand{\W}{\mathcal W}
\newcommand{\Ok}{\mathcal O}
\begin{document}

\title[Twisted Gauss sums and totally isotropic subspaces]
{Higher level quadratically twisted Gauss sums and totally isotropic subspaces}

\author{Lynne Walling}
\address{School of Mathematics, University of Bristol, University Walk, Clifton, Bristol BS8 1TW, United Kingdom;
phone +44 (0)117 331-5245}
\email{l.walling@bristol.ac.uk}

\keywords{Gauss sums, quadratic forms}

\begin{abstract}  We consider a generalized Gauss sum supported on matrices over a number field.  We evaluate this Gauss sum and relate it to the number of totally isotropic subspaces of related quadratic spaces.  Then we consider a further generalization of such a Gauss sum, realizing its value in terms of numbers of totally isotropic subspaces of related quadratic spaces.
\end{abstract}

\maketitle
\def\thefootnote{}
\footnote{2010 {\it Mathematics Subject Classification}: Primary
11L05, 11E08 }
\def\thefootnote{\arabic{footnote}}

\section{Introduction}
\smallskip

Gauss sums and their numerous generalizations are ubiquitous in number theory.
When studying the action of Hecke operators on half-integral weight Hilbert-Siegel modular forms, the generalized Gauss sum we encounter is defined as follows.
Let $\kfld$ be a number field with $\Ok$ its ring of integers, $\calP$ a nondyadic prime ideal in $\Ok$, and $\F=\Ok/\calP$; we fix $\rho\in\partial^{-1}\calP^{-1}$ so that $\rho\Ok_{\calP}=\partial^{-1}\calP^{-1}\Ok_{\calP}$ (where $\partial$ is the different of $\kfld$).
Then
for $T\in\F^{n,n}_{\sym}$ (meaning that $T$ is a symmetric $n\times n$ matrix over $\F$), we set
$$\G^*_T(\calP)=\sum_{S\in\F^{n,n}_{\sym}}
\left(\frac{\det S}{\calP}\right)\e\{2TS\rho\}$$
where $\sigma$ denotes the matrix trace map,
$\e\{*\}=\exp(\pi i Tr^{\kfld}_{\Q}(\sigma(*)))$,
and $\left(\frac{*}{\calP}\right)$ is the Legendre symbol.
One sees that for $M,N\in\Ok^{n,n}_{\sym}$ with $M\equiv N\ (\text{mod }\calP)$, we have
$\e\{2M\rho\}=\e\{2N\rho\}$; consequently, $\G^*_T(\calP)$ is well-defined, although it is dependent on the choice of $\rho$.  

For our application to half-integral weight Hecke operators, we need to relate these Gauss sums to 
$R^*(T\perp\big<1\big>,0_a)$, which is 
the number of $a$-dimensional totally isotropic subspaces of the dimension $n+1$ $\F$-space $V$ whose quadratic form is given by $T\perp\big<1\big>$.  
(A subspace $W$ of $V$ is totally isotropic if the quadratic form restricted to $W$ is 0, and $A\perp B$ denotes the block-diagonal matrix $\diag(A,B)$.)  
In Theorem 1.1 we evaluate $\G^*_T(\calP)$, and in Corollary 1.2 we give
$\G^*_T(\calP)$ in terms of $R^*(T\perp\big<1\big>,0_a)$.

To state the theorem,  set
$\varepsilon=\left(\frac{-1}{\calP}\right),$ and fix $\omega\in\F$ so that $\omega$ is not a square in $\F$; set $J_n=I_{n-1}\perp\big<\omega\big>$.  For $T,S\in\F^{n,n}_{\sym}$,  write $T\sim S$ if there is some $G\in GL_n(\F)$ so that $T=\,^tGSG$.  Note that with $d=\rank T$,  either $T\sim I_d\perp 0_{n-d}$ or $T\sim J_d\perp 0_{n-d}$. 
With this notation, we have the following.

\begin{thm}  Take $T\in\F^{n,n}_{\sym}$ where $n\in\Z_+$.
Suppose that $0\le d\le n$ and
$T\sim I_d\perp 0_{n-d}$ or $T\sim J_d\perp 0_{n-d}$.  Take $c$ so that $d=2c$ or $d=2c+1$.
\begin{enumerate}
\item[(a)]  Suppose that $n=2m$.  Then with $N(\calP)$ the norm of $\calP$,
$$\G^*_T(\calP)=(-1)^c\varepsilon^m N(\calP)^{m^2}\cdot
\prod_{i=1}^{m-c}(N(\calP)^{2i-1}-1).$$
\item[(b)]  Suppose that $n=2m+1$. When $d=2c$, $\G^*_T(\calP)=0$.  When $d=2c+1$,
$$\G^*_T(\calP)=(-1)^c\varepsilon^{m+c}N(\calP)^{m^2+2m-c}\,\G^*_1(\calP)\cdot
\prod_{i=1}^{m-c}(N(\calP)^{2i-1}-1)$$
if $T\sim I_d\perp 0_{n-d}$, and 
$\G^*_T(\calP)=-\G^*_{I_d\perp 0_{n-d}}(\calP)$
if $T\sim J_d\perp 0_{n-d}$.  
\end{enumerate}
\end{thm}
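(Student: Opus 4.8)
The plan is to unpack the definition, reduce $T$ to standard form, dispose of the ``Legendre–twist'' and vanishing cases by a scaling identity, and then compute the remaining sums by an induction on $n$ carried out by peeling off one row and column at a time. Write $\chi=\left(\tfrac{\cdot}{\calP}\right)$; since $x\mapsto\e\{2x\rho\}$ factors through a nontrivial additive character $\psi$ of $\F$, we have $\G^*_T(\calP)=\sum_{S\in\F^{n,n}_{\sym}}\chi(\det S)\,\psi(\sigma(TS))$, with $\sigma$ now the trace of an $n\times n$ matrix over $\F$. As $S\mapsto GS\,{}^tG$ permutes $\F^{n,n}_{\sym}$ while multiplying $\det S$ by the square $(\det G)^2$, and $\psi(\sigma(({}^tGTG)S))=\psi(\sigma(T\,GS\,{}^tG))$, one gets $\G^*_{{}^tGTG}(\calP)=\G^*_T(\calP)$; so it suffices to take $T=I_d\perp 0_{n-d}$ or $T=J_d\perp 0_{n-d}$. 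Note that $\det S=0$ forces $\chi(\det S)=0$, so only invertible $S$ contribute; write $q=N(\calP)$. Replacing $S$ by $\lambda S$ ($\lambda\in\F^{*}$) gives $\G^*_{\lambda T}(\calP)=\chi(\lambda)^n\,\G^*_T(\calP)$; with $\lambda=\omega$, and comparing discriminants of nondegenerate parts (so $\omega(I_d\perp 0_{n-d})\sim I_d\perp 0_{n-d}$ when $d$ is even and $\sim J_d\perp 0_{n-d}$ when $d$ is odd), this yields at once $\G^*_T(\calP)=0$ when $n$ is odd and $d$ even, $\G^*_{J_d\perp 0_{n-d}}(\calP)=-\G^*_{I_d\perp 0_{n-d}}(\calP)$ when $n$ is odd and $d$ odd, and $\G^*_{J_d\perp 0_{n-d}}(\calP)=\G^*_{I_d\perp 0_{n-d}}(\calP)$ when $n$ is even and $d$ odd. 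Everything then reduces to computing $f(n,d):=\G^*_{I_d\perp 0_{n-d}}(\calP)$, the one remaining coincidence (both $n,d$ even) being forced afterwards because the answer turns out not to see the discriminant.

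To compute $f(n,d)$ I would induct on $n$, peeling the last row and column: $S=\left(\begin{smallmatrix}S'&v\\ {}^tv&a\end{smallmatrix}\right)$ with $S'\in\F^{n-1,n-1}_{\sym}$, $v\in\F^{n-1}$, $a\in\F$, and $\det S=a\det S'-{}^tv(\operatorname{adj}S')v$. If $d<n$ then $\psi(\sigma(TS))$ depends only on $S'$; summing over $a$, the terms with $S'$ invertible give $\chi(\det S')\sum_{b\in\F^{*}}\chi(b)=0$, and the terms with $\rank S'\le n-3$ vanish because then $\operatorname{adj}S'=0$ and $\det S=0$. So only $\rank S'=n-2$ survives: there $\operatorname{adj}S'=\mu\,u\,{}^tu$ with $u$ spanning $\ker S'$ and $\chi(\mu)=\chi(\delta(S'))$, where $\delta(S')$ is the discriminant of the nondegenerate part of $S'$, so that $\chi(-{}^tv(\operatorname{adj}S')v)=\varepsilon\,\chi(\delta(S'))$ for the $q^{n-1}-q^{n-2}$ vectors $v$ with ${}^tuv\ne0$ and $0$ otherwise. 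This gives the recursion
$$f(n,d)=\varepsilon\,q^{\,n-1}(q-1)\sum_{\substack{S'\in\F^{n-1,n-1}_{\sym}\\ \rank S'=n-2}}\chi(\delta(S'))\,\psi\bigl(\sigma((I_d\perp 0_{n-1-d})S')\bigr)\qquad(d<n).$$
When $d=n$ the peeled entry $a$ carries an extra factor $\psi(a)$ and the analogous computation throws off the Gauss sum $\G^*_1(\calP)$ instead; this is the origin of the $\G^*_1(\calP)$ appearing in part (b) and of the single unresolved square root of $\varepsilon q$.

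The residual sum above — a sum over corank-$1$ symmetric matrices, and over corank-$\ge 1$ matrices after iterating — I would evaluate by writing it as a difference of two $GL_{n-1}(\F)$-orbit sums (the orbits of $I_{n-2}\perp 0_1$ and $J_{n-2}\perp 0_1$, on which $\chi(\delta(\cdot))=\pm1$), expressing each orbit sum as a sum over $GL_{n-1}(\F)$ of $\psi(\cdots)$ divided by the order of the relevant stabilizer, and then M\"obius-inverting over the subspace lattice. This turns it into a sum over subspaces $U\subseteq\F^{n-1}$ weighted by powers of $\sum_{x\in U}\psi(Q_d(x))=q^{\dim U-\rank(Q_d|_U)}\,\G^*_1(\calP)^{\rank(Q_d|_U)}\,\chi(\delta(Q_d|_U))$, where $Q_d$ is the rank-$d$ form $x_1^2+\cdots+x_d^2$. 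Grouping the $U$ by the isometry type of $Q_d|_U$ and feeding in the classical counts for the number of subspaces of a quadratic space with prescribed radical dimension and discriminant, one extracts the power $q^{m^2}$ (resp.\ $q^{m^2+2m-c}\,\G^*_1(\calP)$ when $n=2m+1$), the product $\prod_{i=1}^{m-c}(q^{2i-1}-1)$, and the sign $(-1)^c\varepsilon^m$ (resp.\ $(-1)^c\varepsilon^{m+c}$), the sign bookkeeping coming from the $\varepsilon$ gained at each peeling step together with $\G^*_1(\calP)^2=\varepsilon q$. The base cases $n=1$ (with $f(1,1)=\sum_{s\in\F}\chi(s)\psi(s)=\G^*_1(\calP)$ and $f(1,0)=0$) and $n=2$ (a direct computation giving $\varepsilon q(q-1)$) anchor the induction.

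The step I expect to be the main obstacle is this combinatorial endgame: controlling how $\rank(Q_d|_U)$ and $\delta(Q_d|_U)$ are distributed as $U$ runs over the subspaces of $\F^{n-1}$ of a fixed dimension, and verifying that the resulting alternating $q$-series collapses precisely to $\prod_{i=1}^{m-c}(q^{2i-1}-1)$ times the stated power of $q$ and the stated sign. The reductions leading up to it are routine; this is the combinatorial core of the theorem, and it is exactly where totally isotropic subspaces (the $U$ with $Q_d|_U=0$) enter the picture, foreshadowing Corollary 1.2.
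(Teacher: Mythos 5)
Your reductions are sound and in places slicker than the paper's: the scaling identity $\G^*_{\lambda T}(\calP)=\left(\frac{\lambda}{\calP}\right)^n\G^*_T(\calP)$ disposes of the vanishing case ($n$ odd, $d$ even) and of the relations $\G^*_{J_d\perp 0_{n-d}}=\mp\G^*_{I_d\perp 0_{n-d}}$ in one line, where the paper obtains the same facts by running parallel computations with the $J$-twisted sums $\overline\G_{I_n},\overline\G_{J_n}$ and ``virtually identical arguments.'' The adjugate recursion for $d<n$ is also correct: the corank-$\ge 2$ blocks $S'$ contribute nothing, the invertible ones die under the average over $a$, and $\chi(-\,^tv\operatorname{adj}(S')v)=\varepsilon\chi(\delta(S'))$ for the $N(\calP)^{n-2}(N(\calP)-1)$ vectors $v$ off the hyperplane $u^{\perp}$. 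This is a genuinely different deconstruction from the paper's, which instead conditions on the upper-left $d\times d$ block of matrices in the orbits of $I_n$ and $J_n$ and thereby reduces directly to Gauss sums $\G^*_Y$ of smaller size.

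The proof is nevertheless incomplete, and the missing piece is not a routine verification --- it is where essentially all of the paper's work is concentrated. Your recursion does not close on itself: it converts $\G^*_{I_d\perp 0_{n-d}}(\calP)$ into the rank-restricted twisted sum $\sum_{\rank S'=n-2}\chi(\delta(S'))\e\{2T'S'\rho\}$, which is precisely the quantity $\G^*_{T'}(\calP;n-2)$ of the paper's Section 4 and is of the same order of difficulty as the original sum (Proposition 4.1 needs all of Theorem 1.1, plus more, to evaluate it). Carrying out your proposed evaluation --- orbit sums divided by stabilizer orders, M\"obius inversion over the subspace lattice, then grouping subspaces $U$ by the isometry type of $Q_d|_U$ --- requires (i) the counts of subspaces with prescribed radical dimension and discriminant, i.e.\ the representation numbers $r^*(I_n,Y)$, $r^*(J_n,Y)$ of Lemma 5.1, and (ii) the collapse of the resulting alternating series to $(-1)^c\varepsilon^m N(\calP)^{m^2}\prod_{i=1}^{m-c}(N(\calP)^{2i-1}-1)$, which is exactly the content of Lemma 5.4 and the telescoping identities for $A(t,q)$, $B$, $C(t)$, $D(t,q)$ in Section 3. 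You flag this step yourself as the main obstacle, and nothing in the proposal addresses it. Two further loose ends: the case $d=n$ is only gestured at (there the $a$-average no longer kills invertible $S'$, and the $v$-sum produces a second quadratic Gauss sum in $\,^tvS'^{-1}v$, so the recursion changes structurally --- this is why the paper treats $\G^*_{I_n}$, $\G^*_{J_n}$ by a separate device, the factorization $\G_{I_n}=(\G^*_1)^{n^2}$); and the claim that the case with $n$ and $d$ both even is ``forced afterwards'' is not justified, since your recursion only computes $\G^*_{I_d\perp 0_{n-d}}$ and showing that $\G^*_{J_d\perp 0_{n-d}}$ agrees requires rerunning the entire combinatorial evaluation with $J_d$ in place of $I_d$.
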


\begin{cor}  Take $T\in\F^{n,n}_{\sym}$ where $n\in\Z_+$.
Let $V$ be the dimension $n+1$ space over $\F$ with quadratic form given by $T\perp\big<1\big>$, and
let $R^*(T\perp\big<1\big>,0_a)$ be the number of $a$-dimensional  totally isotropic subspaces of $V$.
We have
$$\left(\G_1^*(\calP)\right)^n\G^*_T(\calP)
=\sum_{a=0}^n (-1)^{n+a}N(\calP)^{n(n+1)/2+a(a-n)}
R^*(T\perp\big<1\big>,0_a).$$
\end{cor}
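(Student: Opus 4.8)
The plan is to combine Theorem~1.1 with the standard closed formula for the number of totally isotropic subspaces of a quadratic space over $\F$. Write $W=T\perp\big<1\big>$ and $q=N(\calP)$; then $\dim W=n+1$, $\rank W=d+1$, the radical $\rad W$ has dimension $n-d$, and the nondegenerate part $W_0$ has dimension $d+1$ and some Witt index $\nu$. Both sides of the asserted identity depend on $T$ only through $n$, $d=\rank T$ and the equivalence class of $T$ (for the right-hand side because $R^*(W,0_a)$ depends only on $\dim W$, $\rank W$ and $\nu$), so it suffices to verify the identity for the representatives $T=I_d\perp 0_{n-d}$ and $T=J_d\perp 0_{n-d}$. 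In those cases I would first record $\nu$ explicitly: if $d=2c$ then $W_0$ is odd-dimensional and $\nu=c$; if $d=2c+1$ then $W_0$ is even-dimensional and $\nu=c+1$ when its discriminant is $(-1)^{c+1}$ modulo squares and $\nu=c$ otherwise, which for $T\sim I_d\perp 0$ amounts to $\varepsilon^{c+1}=1$ and for $T\sim J_d\perp 0$ to $\varepsilon^{c+1}=-1$.

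Next I would expand the right-hand side, decomposing a totally isotropic subspace of $W$ according to its intersection with $\rad W$; this gives
$$R^*(W,0_a)=\sum_{r=0}^{n-d}\binom{n-d}{r}_q q^{(n-d-r)(a-r)}\,R^*(W_0,0_{a-r}),\qquad R^*(W_0,0_j)=\binom{\nu}{j}_q\prod_{i=1}^{j}\bigl(q^{\,d+1-\nu-i}+1\bigr),$$
where $\binom{N}{a}_q$ is the Gaussian binomial coefficient. Substituting this into $\sum_{a=0}^{n}(-1)^{n+a}q^{n(n+1)/2+a(a-n)}R^*(W,0_a)$ and interchanging the two summations (replacing $a$ by $j=a-r$), the right-hand side becomes
$$(-1)^n q^{n(n+1)/2}\sum_{j\ge 0}(-1)^j R^*(W_0,0_j)\,q^{\,j(j-d)}\sum_{r\ge 0}(-1)^r q^{\,r^2+r(j-n)}\binom{n-d}{r}_q.$$
The remaining task is the closed-form evaluation of this double sum. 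The key tool is Gauss's identity: $\sum_{a=0}^{N}(-1)^{a}q^{a(a-N)}\binom{N}{a}_q$ equals $q^{-k^2}\prod_{i=1}^{k}(q^{2i-1}-1)$ when $N=2k$ and vanishes when $N$ is odd. The extra factors $\prod_i(q^{\,d+1-\nu-i}+1)$ coming from $R^*(W_0,0_j)$ are absorbed by repeatedly pairing $(q^c-1)(q^c+1)=q^{2c}-1$, and this is exactly what manufactures the products $\prod_i(q^{2i-1}-1)$ appearing in Theorem~1.1.

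Finally I would match the outcome, case by case, with Theorem~1.1 after using $\bigl(\G^*_1(\calP)\bigr)^2=\varepsilon\,N(\calP)$ to rewrite $\bigl(\G_1^*(\calP)\bigr)^n$. For $n=2m$ one has $\bigl(\G^*_1(\calP)\bigr)^n=\varepsilon^m q^m$, and the double sum should collapse to $(-1)^c q^{m^2+m}\prod_{i=1}^{m-c}(q^{2i-1}-1)$, matching part~(a). For $n=2m+1$ with $d=2c$ I expect the double sum to vanish identically, matching $\G^*_T(\calP)=0$ in part~(b). For $n=2m+1$ with $d=2c+1$ there survives an odd number of ``square'' contributions, leaving a single factor $\G^*_1(\calP)$ together with the residual sign $(-1)^c$ in the $I$-type case and an extra $-1$ in the $J$-type case, matching the rest of part~(b).

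The main obstacle is the closed-form evaluation of the double $q$-binomial sum: because the $q^{r^2}$ are in ``base $q^2$'' relative to the Gaussian binomials $\binom{n-d}{r}_q$, the inner $r$-sum does not simplify on its own, so the reduction must be performed on the full double sum, with the $\prod_i(q^{\,d+1-\nu-i}+1)$ from $R^*(W_0,0_j)$ interacting with Gauss's identity. Tracking the exponents of $q$ and the $\varepsilon$-dependence of $\nu$ (against the partial $\varepsilon$-independence of the formula in Theorem~1.1) through this reduction, and separately confirming the vanishing in the $n$ odd, $\rank T$ even case, is where the real work lies. An alternative is induction on $n$, using the recursion for $R^*$ under adjoining a hyperbolic plane to $W$ together with the matching two-step recursion read off from Theorem~1.1, but the bookkeeping is of comparable difficulty.
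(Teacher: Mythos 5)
Your route is genuinely different from the paper's: the paper proves the corollary in three lines, by observing that $\left(\G^*_1(\calP)\right)^n\G^*_T(\calP)$ has no dependence on the choice of $\rho$ and then invoking the (local) argument of Lemma 3.1 of \cite{half-int-aps}, with only a small remark needed to pass from even forms to forms over $\F$ where $2$ is a unit. You instead propose a self-contained verification: reduce to the representatives $U_{n,d}$ and $\overline U_{n,d}$ (legitimate, since both sides are class functions of $T$), expand $R^*(T\perp\big<1\big>,0_a)$ via the radical decomposition and the standard count of totally isotropic subspaces of the nondegenerate part, and compare the resulting double sum with Theorem 1.1. Your preparatory formulas all check out: the Witt indices, the identity $R^*(W,0_a)=\sum_r\binom{n-d}{r}_q q^{(n-d-r)(a-r)}R^*(W_0,0_{a-r})$, the count $R^*(W_0,0_j)$, and the exponent bookkeeping in the rearranged right-hand side (and small cases such as $n=1,2$ confirm the target identity). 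Completed, this would give an independent proof not relying on \cite{half-int-aps}, which has some value.

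The gap is that the core of the argument --- the closed-form evaluation of $\sum_{j}(-1)^j R^*(W_0,0_j)\,q^{j(j-d)}\sum_{r}(-1)^r q^{r^2+r(j-n)}\binom{n-d}{r}_q$ in each case of Theorem 1.1, including the required vanishing when $n$ is odd and $d$ is even --- is not carried out. You state the expected outcomes (``should collapse to'', ``I expect \dots to vanish'') and yourself identify this as where the real work lies. As you correctly note, Gauss's alternating identity does not apply directly to the inner sum because of the $q^{r^2}$ weight, so the double sum must be collapsed as a whole; in practice this requires a two-variable recursion in the style of the paper's manipulations of $A(t,q)$, $C(t)$ and $D(t,q)$ in Section 3 (driven by $\bbeta(t,s)=N(\calP)^s\bbeta(t-1,s)+\bbeta(t-1,s-1)$ and its $\bgamma$ analogue), or an induction on $n-d$, and the parity and Witt-index case analysis must be tracked through it. Until that identity is established, the proposal is a well-set-up plan rather than a proof.
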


To prove Theorem 1.1, we perform a deconstruction to  reduce $\G^*_T(\calP)$ to a sum in terms of Gauss sums $\G^*_Y(\calP)$ where the $Y$ are smaller than  $T$.  For this we repeatedly use the elementary fact that $\sigma(AB)=\sigma(BA)$, knowledge of representation numbers over finite fields, and elementary combinatorial methods.  Then using induction, we prove Theorem 1.1; Corollary 1.2 then follows  from Lemma 3.1 of \cite{half-int-aps}.

In Proposition 4.1, we consider the following generalized Gauss sum:
with $T\in\F^{n,n}_{\sym}$ and $0\le r\le n$, set
 $$\G^*_T(\calP;r)=
\sum_{S\sim I_r\perp 0_{n-r}}\e\{2TS\rho\}-\sum_{S\sim J_r\perp 0_{n-r}}\e\{2TS\rho\}$$
(so for $r=n$, this is $\G^*_T(\calP)$).
We again deconstruct $\G^*_T(\calP;r)$ as a sum in terms of Gauss sums $\G^*_Y(\calP)$ with the $Y$ smaller than $T$, and then from this and Theorem 1.1, we describe $\G^*_T(\calP;r)$ in terms of numbers of totally isotropic subspaces of spaces of quadratic spaces related to $T$.

It is important for us to note that in \cite{S}, Saito studies analogues of these Gauss sums over finite fields, with an interest to applications to twists of Siegel modular forms.  
Although his main interest is in twists by the quadratic and the trivial characters, he considers twists by all characters, making his arguments more complicated than ours.  We note that Theorem 1.3 \cite{S} includes the results of our Theorem 1.1.
Saito also considers finite field analogues of the  Gauss sums
$\G^*_T(\calP;r)$.
He develops relations between these Gauss sums, some of which are quite complicated.  In Proposition 4.1 (a), we present a simple relation very similar to his relation in Proposition 1.12 \cite{S}; then in Proposition 4.1 (b) we present formulas for these Gauss sums in terms of numbers of totally isotropic subspaces.
The value of this paper is to present an approach simpler than that of \cite{S}, demonstrating our deconstruction technique,
and to relate these Gauss sums to representations of zeros.

Note that it is quite easy to modify our techniques  to
 generalized Gauss sums twisted by the trivial character, and to
Gauss sums over
a finite field $\F_q$ with odd characteristic $p$ where
  $\e\{*\}$ is replaced by  $\exp(\pi i Tr^{\F_q}_{\F_p}(\sigma(*))/p)).$

\bigskip
\section{Notation}
\smallskip

Besides the notation given in the introduction, we define the following.

For $t,s\in \Z_+$ with $s\le t$, and $X\in\F^{t,t}_{\sym}$, $Y\in\F^{s,s}_{\sym}$,  define the
representation number $r(X,Y)$ to be
$$r(X,Y)=\#\{C\in\F^{t,s}:\ ^tCXC=Y\ \},$$
and define the primitive representation number $r^*(X,Y)$ to be
$$r^*(X,Y)=\#\{C\in\F^{t,s}:\ ^tCXC=Y,\ \rank C=s\ \}.$$
Let $o(X)$ denote the order of the orthogonal group of $X$; so $o(X)=r^*(X,X).$
We make great use of the following elementary functions, that help us encode formulas involving
representation numbers.
\begin{align*}
\bmu(t,s)&=\prod_{i=0}^{s-1}(N(\calP)^{t-i}-1),\ 
\bdelta(t,s)=\prod_{i=0}^{s-1}(N(\calP)^{t-i}+1),\\
\bbeta(t,s)&=\frac{\bmu(t,s)}{\bmu(s,s)},\ 
\bnu(t,s)=\prod_{i=s}^{t-1}(N(\calP)^t-N(\calP)^i),\ 
\bgamma(t,s)=\frac{\bmu\bdelta(t,s)}{\bmu\bdelta(s,s)}.
\end{align*}
We agree that when $s=0$, the value of any of these functions is 1; when $s<0$, we agree that $\bbeta(t,s)=0$.  Note that $\bbeta(t,s)$ is the number of $s$-dimensional subspaces of a $t$-dimensional space over $\F$, and $\bnu(t,0)$ is the number of bases for a $t$-dimensional space.
Finally, for $d\in\Z_+$ and $i\in\Z$ with $0\le i\le d$, we set
$U_{d,i}=I_i\perp 0_{d-i}$ and $\overline U_{d,i}=J_i\perp 0_{d-i}.$

\bigskip
\section{Proofs of Theorem 1.1 and Corollary 1.2}
\smallskip

We begin by proving Theorem 1.1.
As $\calP$ is fixed, in this section we write $\G^*_T$ for $\G^*_T(\calP)$.

First notice that
$$\G^*_{0_n}=\sum_{Y\sim I_n}1-\sum_{Y\sim J_n}1
=\frac{|GL_n(\F)|}{o(I_n)}-\frac{|GL_n(\F)|}{o(J_n)};$$
so using Lemma 5.1, when $n$ is odd we get
$\G^*_{0_n}=0$, and when $n=2m$ we get $$\G^*_{0_n}=\varepsilon^mN(\calP)^{m^2}\frac{\bmu(2m,2m)}{\bmu\bdelta(m,m)}.$$

For the rest of this section, take $d$ so that $0<d<n$.
With $G\in GL_n(\F)$, we have 
$$\e\{2\,^tGI_nGU_{n,d}\cdot\rho\}
=\e\{2Y'\rho\},\ 
\e\{2\,^tGI_nG\overline U_{n,d}\cdot\rho\}
=\e\{2Y'J_d\rho\}$$ 
where $Y'$
 is the upper left block of $^tGI_nG$; similarly,
$$\e\{2\,^tGJ_nGU_{n,d}\cdot\rho\}
=\e\{2Y'\rho\},\ 
\e\{2\,^tGJ_nG\overline U_{n,d}\cdot\rho\}
=\e\{2Y'J_d\rho\}$$
 where $Y'$
is the upper left block of $^tGJ_nG$.
The number of $Y\sim I_n$ with upper left $d\times d$ block $Y'$ is
$\bnu(n,d) r^*(I_n,Y')/o(I_n),$
as for $C\in\F^{n,d}$ with $\rank C=d$, the number of ways to extend $C$ to an element of $GL_n(\F)$ is $\bnu(n,d)$.
Similarly, the number of $Y\sim J_n$ with upper left $d\times d$ block $Y'$ is $\bnu(n,d) r^*(J_n,Y')/o(J_n).$
Hence we have
\begin{align*}
\G^*_{U_{n,d}}
&=\sum_{Y\sim I_n} \e\{2YU_{n,d}\cdot\rho\}
-\sum_{Y\sim J_n} \e\{2YU_{n,d}\cdot\rho\}\\
&=\sum_{G\in GL_n(\F)}
\left( \frac{\e\{2\,^tGI_nGU_{n,d}\cdot\rho\}}{o(I_n)}
- \frac{\e\{2\,^tGJ_nGU_{n,d}\cdot\rho\}}{o(J_n)}\right)\\
&=\bnu(n,d)\sum_{Y'\in\F^{d,d}_{\sym}}
\left(\frac{r^*(I_n,Y')}{o(I_n)}-\frac{r^*(J_n,Y')}{o(J_n)}\right)\e\{2Y'\rho\}.
\end{align*}
Note that we can partition $\F^{d,d}_{\sym}$ into $GL_d(\F)$-orbits, and
in Lemma 5.1, we compute representation numbers $r^*(\cdot,\cdot)$.
We find that when $n$ is odd, we have $o(I_n)=o(J_n)$,
$r^*(I_n, U_{d,2k})-r^*(J_n,U_{d,2k})=0,$
and
$$r^*(I_n,U_{d,2k+1})-r^*(J_n,U_{d,2k+1})
=r^*(J_n,\overline U_{d,2k+1})-r^*(I_n,\overline U_{d,2k+1}).$$
Hence with $n=2m+1$, using Lemma 5.1 and then Lemma 5.3, we get
\begin{align*}
\G^*_{U_{n,d}}
&=\frac{\bnu(n,d)}{o(I_n)}
\sum_{k=0}^{d/2} 2\varepsilon^{m-k}N(\calP)^{2mk-k^2+m-k+(d-2k-1)(d-2k-2)/2}\\
&\quad\cdot
\bmu\bdelta(m,d-k-1)
\left(\sum_{Y\sim U_{d,2k+1}}\e\{2Y\rho\}
-\sum_{Y\sim\overline U_{d,2k+1}}\e\{2Y\rho\}\right)\\
&\quad=\frac{\bnu(n,d)}{o(I_n)}
\sum_{k=0}^{d/2} 2\varepsilon^{m-k}N(\calP)^{2mk-k^2+m-k+(d-2k-1)(d-2k-2)/2}\\
&\quad\cdot
\bmu\bdelta(m,d-k-1)
\cdot\sum_{\cls Y\in\F^{2k+1,2k+1}_{\sym}}\frac{r^*(I_d,Y)}{o(Y)}\, \G^*_Y
\end{align*}
(where $\cls Y$ is the isometry class of $Y$, or equivalently, the $GL_d(\F)$-orbit of $Y$).
With $n=2m+1$, similar reasoning gives us
\begin{align*}
\G^*_{\overline U_{n,d}}
&=\bnu(n,d)\sum_{Y'\in\F^{d,d}_{\sym}}
\left(\frac{r^*(I_n,Y')}{o(I_n)}-\frac{r^*(J_n,Y')}{o(J_n)}\right)
\e\{2Y'J_d\rho\}\\
&\quad=\frac{\bnu(n,d)}{o(I_n)}
\sum_{k=0}^{d/2} 2\varepsilon^{m-k}N(\calP)^{2mk-k^2+m-k+(d-2k-1)
(d-2k-2)/2}\\
&\quad\cdot
\bmu\bdelta(m,d-k-1)\cdot
\sum_{\cls Y\in\F^{2k+1,2k+1}_{\sym}}\frac{r^*(J_d,Y)}{o(Y)}\, \G^*_Y.
\end{align*}

Now suppose that $n=2m$.  Then using Lemma 5.1 we have
\begin{align*}
&\frac{r^*(I_n,Y)}{o(I_n)}-\frac{r^*(J_n,Y)}{o(J_n)}\\
&\quad=
\frac{1}{o(I_{n+1})}
(r^*(I_{2m+1},\big<1\big>\perp Y)-r^*(J_{2m+1},\big<1\big>\perp Y)).
\end{align*}
So following the above reasoning, for $n=2m$ we have
\begin{align*}
\G^*_{U_{n,d}}
&=\frac{\bnu(n,d)}{o(I_{n+1})}
\sum_{k=0}^{d/2} 2\varepsilon^{m-k}N(\calP)^{2mk-k^2+m-k+(d-2k)(d-2k-1)/2}\\
&\quad\cdot
\bmu\bdelta(m,d-k)\cdot
\sum_{\cls Y\in\F^{2k,2k}_{\sym}}\frac{r^*(I_d,Y)}{o(Y)}\, \G^*_Y,\\
\G^*_{\overline U_{n,d}}
&=\frac{\bnu(n,d)}{o(I_{n+1})}
\sum_{k=0}^{d/2} 2\varepsilon^{m-k}N(\calP)^{2mk-k^2+m-k+(d-2k)(d-2k-1)/2}\\
&\quad\cdot
\bmu\bdelta(m,d-k)\cdot
\sum_{\cls Y\in\F^{2k,2k}_{\sym}}\frac{r^*(J_d,Y)}{o(Y)}\, \G^*_Y.
\end{align*}

To evaluate $\G^*_{I_n}$ and $\G^*_{J_n}$, we make use of the (non-twisted) Gauss sums
\begin{align*}
\G_{I_n}&=\sum_{U\in\F^{n,n}}\e\{2I_n[U]\rho\},\ 
\G_{J_n}=\sum_{U\in\F^{n,n}}\e\{2J_n[U]\rho\},\\
\overline\G_{I_n}
&=\sum_{U\in\F^{n,n}}\e\{2I_n[U]J_n\rho\},\ 
\overline\G_{J_n}
=\sum_{U\in\F^{n,n}}\e\{2J_n[U]J_n\rho\}.
\end{align*}
For $Y\in\F^{n,n}$, by looking at the trace of the matrix $^tYY$, it is easy to check that 
 $\G_{I_n}=(\G_1^*)^{n^2}$.
Similarly, we have 
\begin{align*}
\G_{J_n}&=(\G_1^*)^{n(n-1)}\cdot(\G^*_{\omega})^{n}=
\overline\G_{I_n},\ 
\overline\G_{J_n}=(\G^*_1)^{(n-1)^2}\cdot(\G^*_{\omega})^{2n-1}.
\end{align*}
Classical techniques give us $\G^*_{\omega}=-\G^*_1$
and $(\G^*_1)^2=\varepsilon N(\calP)$.

On the other hand, we have
\begin{align*}
\G_{I_{n}}&=\sum_{Y\in\F^{n,n}_{\sym}} r(I_{n}, Y) \e\{2Y\rho\},\ 
\G_{J_{n}}=\sum_{Y\in\F^{n,n}_{\sym}} r(J_{n}, Y) \e\{2Y\rho\},\\ 
\overline\G_{I_{n}}&=\sum_{Y\in\F^{n,n}_{\sym}} r(I_{n}, Y) \e\{2YJ_n\rho\},\ 
\overline\G_{J_{n}}=\sum_{Y\in\F^{n,n}_{\sym}} r(J_{n}, Y) \e\{2YJ_n\rho\}.
\end{align*}
Partitioning $\F^{n,n}_{\sym}$ into $GL_{n}(\F)$-orbits,
we get
\begin{align*}
&\frac{1}{o(I_n)}\,\G_{I_n}-\frac{1}{o(J_n)}\,\G_{J_n}\\
&\quad=
\frac{r(I_n,0_n)}{o(I_n)}-\frac{r(J_n,0_n)}{o(J_n)}\\
&\qquad+
\sum_{0<\ell\le n}\sum_{Y\sim U_{n,\ell}} \left(\frac{r(I_n,U_{n,\ell})}{o(I_n)}-
\frac{r(J_n,U_{n,\ell})}{o(J_n)}\right) \e\{2Y\rho\}\\
&\qquad +
\sum_{0<\ell\le n}\sum_{Y\sim\overline U_{n,\ell}} \left(\frac{r(I_n,\overline U_{n,\ell})}{o(I_n)}-
\frac{r(J_n,\overline U_{n,\ell})}{o(J_n)}\right) \e\{2Y\rho\}.
\end{align*}
Notice that $r(I_n,I_{\ell}\perp 0_{d-\ell})=r^*(I_n,I_{\ell})r(I_{n-\ell},0_{d-\ell}).$
So using Lemmas 5.1 and 5.2, and then Lemma 5.3, when $n$ is odd we get
\begin{align*}
\G_{I_n}-\G_{J_n}
&\quad=
\sum_{\substack{0\le \ell\le n\\ \ell\,\text{odd}}}
(r(I_{n}, U_{n,\ell})-r(J_{n}, U_{n,\ell}))\\
&\qquad\cdot
\left(\sum_{Y\sim U_{n,\ell}}\e\{2Y\rho\}
-\sum_{Y\sim \overline U_{n,\ell}}\e\{2Y\rho\}\right)\\
&\quad=
\sum_{\substack{0\le \ell\le n\\ \ell\,\text{odd}}}
(r(I_{n}, U_{n,\ell})-r(J_{n}, U_{n,\ell}))
\sum_{\cls Y\in\F^{\ell,\ell}_{\sym}}
\frac{r^*(I_{n},Y)}{o(Y)}\, \G^*_Y.
\end{align*}
Similarly, with $n$ odd,
\begin{align*}
\overline\G_{I_{2m+1}}-\overline\G_{J_{2m+1}}
&=\sum_{\substack{0\le \ell\le n\\ \ell\,\text{odd}}}
(r(I_{n}, U_{n,\ell})-r(J_{n}, U_{n,\ell}))
\sum_{\cls Y\in\F^{\ell,\ell}_{\sym}}
\frac{r^*(J_{n},Y)}{o(Y)}\, \G^*_Y.
\end{align*}
 When $n$ is even, similar arguments give us
\begin{align*}
&r^*(I_{n+1},1)\,\G_{I_{n}}-r^*(J_{n+1},1)\,\G_{J_{n}}\\
&=
\sum_{\substack{0\le \ell\le n\\ \ell\,\text{even}}}
(r(I_{n+1}, \big<1\big>\perp U_{n,\ell})-r(J_{n+1}, \big<1\big>\perp U_{n,\ell}))
\sum_{\cls Y\in\F^{\ell,\ell}_{\sym}}
\frac{r^*(I_{n},Y)}{o(Y)}\, \G^*_Y,
\end{align*}
\begin{align*}
&r^*(I_{n+1},1)\,\overline\G_{I_{n}}-r^*(J_{n+1},1)\,\overline\G_{J_{n}}\\
&=
\sum_{\substack{0\le \ell\le n\\ \ell\,\text{even}}}
(r(I_{n+1}, \big<1\big>\perp U_{n,\ell})-r(J_{n+1}, \big<1\big>\perp U_{n,\ell}))
\sum_{\cls Y\in\F^{\ell,\ell}_{\sym}}
\frac{r^*(J_{n},Y)}{o(Y)}\, \G^*_Y.
\end{align*}

Now  we argue by induction on $m$ to prove the theorem in the case that $n=2m+1$.  For $m=0$, we have $\G^*_{U_{1,1}}=\G^*_1$ (by definition of $\G^*_1$), and as we have already noted, $\G^*_{\overline U_{1,1}}=-\G^*_1$.
So suppose that $m\ge1$ and that the theorem holds for all $\G^*_Y$ where
$Y\in\F^{2r+1,2r+1}_{\sym}$ and $0\le r<m$.
With $0<d<n$, we begin with the expression for $\G^*_{U_{n,d}}$ that we derived above.
By the induction hypothesis, for
$2k+1\le d$ and
$Y\in\F^{2k+1,2k+1}_{\sym}$, we have
$\G^*_Y=\varepsilon^k N(\calP)^{k^2+2k}\cdot h_Y$
where $h_Y$ is defined in Lemma 5.4.
So by Lemma 5.4 we have
$\G^*_{U_{n,d}}=0$ when $d$ is even, and
\begin{align*}
\G^*_{U_{n,d}}
&=\frac{\bnu(n,d)}{o(I_n)}
\sum_{k=0}^{d/2} 2\varepsilon^{m-k}N(\calP)^{2mk-k^2+m-k+(d-2k-1)(d-2k-2)/2}\\
&\quad\cdot
\bmu\bdelta(m,d-k-1)\cdot (-1)^k\varepsilon^{k+c} N(\calP)^{k^2+c}\,\bgamma(c,k)
\end{align*}
when $d$ is odd with $d=2c+1$.

So assume now that $d=2c+1$; then we have
\begin{align*}
 \G^*_{U_{n,d}}
&= \frac{\bnu(n,d)}{o(I_n)}
\,2\varepsilon^{m+c} N(\calP)^{m+2c^2}\G^*_1\cdot A(c,0)
\text{  where }\\
A(t,q)&=\sum_{k=0}^t(-1)^k N(\calP)^{2k(m+k-2t-q)}\bmu\bdelta(m,2t+q-k)
\bgamma(t,k).
\end{align*}
Since $\gamma(t,k)=N(\calP)^{2k}\bgamma(t-1,k)+\bgamma(t-1,k-1)$,
we have
\begin{align*}
A(t,q)
&=\sum_{k=0}^{t-1} (-1)^k N(\calP)^{2k(m+k+1-2t-q)}
\bmu\bdelta(m,2t+q-k-1))\bgamma(t-1,k)\\
&\quad\cdot
(\bmu\bdelta(m,2t+q-k)-N(\calP)^{2(m-2t-q+k+1)})\\
&=-A(t-1,q+1)=(-1)^tA(0,t)=(-1)^t \bmu\bdelta(m,t+q).
\end{align*}
Therefore, using that 
$\bnu(n,d)=N(\calP)^{(n-d)(n+d-1)/2}\bmu(n-d,n-d)$, 
\begin{align*}
\G^*_{U_{n,d}}
&=(-1)^c \varepsilon^{m+c} N(\calP)^{m^2+2m-c}\cdot
\frac{\bmu(2(m-c),2(m-c))}{\bmu\bdelta(m-c,m-c)}\, \G^*_1,
\end{align*}
as claimed in the statement of the theorem.
A virtually identical argument gives us $\G^*_{\overline U_{n,d}}=-\G^*_{U_{n,d}}$.

Now, still taking $n=2m+1$ and beginning with our earlier expression for $\G_{I_n}-\G_{J_n},$
we use Lemmas 5.1 and 5.2 to give us
\begin{align*}
\G_{I_n}-\G_{J_n}&=2\sum_{k=0}^{m}\sum_{s=0}^{m-k}(-1)^{m-k-s}
\varepsilon^{m-k}N(\calP)^{2mk-k^2+m-k+(m-k)^2+s^2}\\
&\quad\cdot
\bmu\bdelta(m,k)
\bbeta\bdelta(m-k,s)
\cdot\sum_{\cls Y\in\F^{2k+1,2k+1}_{\sym}}
\frac{r^*(I_{2m+1},Y)}{o(Y)}\, \G^*_Y.
\end{align*}
Using that $o(I_n)=2N(\calP)^{m^2}\bmu\bdelta(m,m)$,
and the induction hypothesis for $Y\in\F^{2k+1,2k+1}_{\sym}$ with $k<m$, we have \begin{align*}
\G_{I_n}-\G_{J_n}&=
o(I_{2m+1})\G^*_{I_{2m+1}}
-o(I_{2m+1})\varepsilon^mN(\calP)^{m^2+2m}\G^*_1\, h_{I_{2m+1}}+2\G^*_1B
\end{align*}
where 
\begin{align*}B&=\sum_{k=0}^m\sum_{s=0}^{m-k}(-1)^{m-k-s}
\varepsilon^{m-k}N(\calP)^{m^2+m-k+s^2}\bmu\bdelta(m,k)\bbeta\bdelta(m-k,s)\\
&\quad\cdot
\varepsilon^kN(\calP)^{k^2+2k}
\sum_{\cls Y\in\F^{2k+1,2k+1}_{\sym}}
\frac{r^*(I_{2m+1},Y)}{o(Y)}\, h_Y.
\end{align*}
By Lemma 5.4, we get
\begin{align*}
B&=\sum_{k=0}^m\sum_{s=0}^{m-k}(-1)^{m-s}N(\calP)^{m^2+2m+k(k-1)+s^2}
\bmu\bdelta(m,k)\bbeta\bdelta(m-k,s)\bgamma(m,k).
\end{align*}
Since
$\bgamma(m,k)\bbeta\bdelta(m-k,s)
=\bbeta\bdelta(m,s)\bgamma(m-s,k),$
we can sum on $0\le s\le m$, $0\le k\le m-s$.  Then replacing $s$ by $m-s$ and using that $\bbeta(m,m-s)=\bbeta(m,s),$ we get
\begin{align*}
&B=\sum_{s=0}^m(-1)^sN(\calP)^{2m^2+2m-2ms+s^2}
\bdelta(m,m-s)\bbeta(m,s)\cdot C(s)
\text{  where}\\
&C(t)=\sum_{k=0}^t N(\calP)^{k(k-1)}\bmu\bdelta(m,k)\bgamma(t,k).
\end{align*}
Since $\bgamma(t,k)=N(\calP)^{2k}\bgamma(t-1,k)+\bgamma(t-1,k-1),$ we have
\begin{align*}
C(t)
&=N(\calP)^{2m}C(t-1)=N(\calP)^{2mt}C(0)=N(\calP)^{2mt}.
\end{align*}
Therefore
\begin{align*}
&B=N(\calP)^{2m^2+2m}D(m,0)
\text{  where}\\
&D(t,q)=\sum_{s=0}^t (-1)^s N(\calP)^{s(s+q)}\bdelta(t+q,t-s)\bbeta(t,s).
\end{align*}
Since $\bbeta(t,s)=N(\calP)^s\bbeta(t-1,s)+\bbeta(t-1,s-1),$ we have
\begin{align*}
D(t,q)
&=D(t-1,q+1)=D(0,t+q)=1.
\end{align*}
Therefore $B=N(\calP)^{2m^2+2m}$.  Our earlier computations show that 
$\G_{I_n}-\G_{J_n}=2 N(\calP)^{2m^2+2m}\G^*_1$;
so
$$\G^*_{I_{2m+1}}=\varepsilon^mN(\calP)^{m^2+2m}\cdot h_{I_{2m+1}}\G^*_1
=(-1)^mN(\calP)^{m^2+m}\,\G^*_1.$$
A virtually identical argument gives us $\G^*_{J_n}=-\G^*_{I_n}$.

Now we argue by induction on $m$ to prove the theorem in the case that $n=2m$.
Since the computation for $m=1$ is essentially identical to the induction step for $m>1$, we formally define $\G^*_{I_0}=\G^*_{J_0}=1$ (which is consistent with the formula claimed in the theorem).  So now suppose that $m\ge1$ and that the theorem holds for all $\G^*_Y$ where $Y\in\F^{2r,2r}_{\sym}$ and $0\le r<m$.
With $0<d<n$, we begin with the expression for $\G^*_{U_{n,d}}$ that we derived above.
Take $c$ so that $d=2c$ or $d=2c+1$.
Using the induction hypothesis, Lemma 5.4, and arguing as we did when $n$ was odd, we get
$$\G^*_{U_{n,d}}=\frac{\bnu(n,d)}{o(I_{n+1})}
2\varepsilon^m N(\calP)^{m+d(d-1)/2}A(c,d-2c)$$
where $A(t,q)$ is as defined earlier in this proof; recall that
$$A(t,q)=(-1)^t\bmu\bdelta(m,t+q).$$
Since $\bmu(2(m-c),1)=\bmu\bdelta(m-c,1)$, for $d=2c$ or $2c+1$
we get
$$\G^*_{U_{n,d}}
=(-1)^c\varepsilon^mN(\calP)^{m^2}
\frac{\bmu(2(m-c),2(m-c))}{\bmu\bdelta(m-c,m-c)}.$$
A virtually identical argument gives us $\G^*_{\overline U_{n,d}}=\G^*_{U_{n,d}}.$

Still assuming that $n=2m$ and beginning with our earlier expression for 
$r^*(I_{n+1},1)\G_{I_n}-r^*(J_{n+1},1)\G_{J_n}$, we use Lemmas 5.1 and 5.2 and the induction hypothesis to get
\begin{align*}
&r^*(I_{n+1},1)\G_{I_n}-r^*(J_{n+1},1)\G_{J_n}\\
&\quad=o(I_{n+1})\G^*_{I_n}-o(I_{n+1})\varepsilon^m N(\calP)^{m^2}
\cdot h_{I_n}
+2\varepsilon^m N(\calP)^{-m}B
\end{align*}
where $B$ is as in the case of $n$ odd.  We saw that $B=N(\calP)^{2m^2+2m}$, and 
$$r^*(I_{n+1},1)\G_{I_n}-r^*(J_{n+1},1)\G_{J_n}
=2\varepsilon^m N(\calP)^{2m+m};$$
so we get 
$$\G^*_{I_{2m}}=\varepsilon^m N(\calP)^{m^2}\cdot h_{I_{2m}}
=(-1)^m\varepsilon^m N(\calP)^{m^2}.$$
The argument to evaluate $\G^*_{J_{2m}}$ is essentially identical to that of evaluating $\G^*_{J_{2m+1}}$, where for this we begin with the identity
$$r^*(J_{2m+1},1)\overline\G_{I_{2m}}-r^*(I_{2m+1},1)\overline\G_{J_{2m}}
=2\varepsilon^m N(\calP)^{2m^2+m}.$$
This proves the theorem.

To prove Corollary 1.2, we first note that by Theorem 1.1, $\left(\G^*_1\right)^m\G^*_T$ has no dependence on our choice of $\rho$.  Thus we can follow the argument of Lemma 3.1 \cite{half-int-aps}, as the techniques are local.  In \cite{half-int-aps}, all quadratic forms were assumed to be even; since $2$ is a unit in $\F$, we have $R^*(T\perp\big<1\big>,0_a)=
R^*(2T\perp\big<2\big>,0_a)$, and hence Corollary 1.2 follows.

\bigskip
\section{Variations on quadratically twisted Gauss sums}
\smallskip

For $T\in\F^{n,n}_{\sym}$ and $0\le r\le n$, here we consider 
$\G_T^*(\calP;r)$, as defined in the introduction.
For $T=0_n$, we have $\G^*_{0_n}(\calP;r)=\G^*_{0_d}(\calP)$, so we only need to consider $T\not=0_n$.

\begin{prop}  Take $n\in\Z_+$, $T\in\F^{n,n}_{\sym}$ and let $d=\rank T$.  
\begin{enumerate}
\item[(a)]  Suppose that $0\le 2t+1\le n$.  When $d$ is even we have
$\G^*_T(\calP;2t+1)=0.$  When $d$ is odd with $d=2c+1$, we have
$\G^*_{\overline U_{n,d}}(\calP;2t+1)=-\G^*_{U_{n,d}}(\calP;2t+1),$ and
\begin{align*}
\G^*_{U_{n,d}}(\calP;2t+1)
&=\frac{\bnu(n,2c+1)}{\bnu(n-1,2c)}\varepsilon^cN(\calP)^c\G^*_1(\calP) 
\G^*_{U_{n-1,2c}}(\calP;2t).
\end{align*}

\item[(b)]  Suppose that $0\le 2t\le n$; set $s=n-2t$.  Then with $c$ so that $d=2c$ or $2c+1$,
we have 
\begin{align*}
\G^*_{T}(\calP;2t)
&=
\frac{\bnu(n,d)}{o(I_{2t+1}\perp0_s)}
\sum_{k=0}^c
(-1)^k\varepsilon^kN(\calP)^{s(2k+1)+2tk+t-k}\bmu\bdelta(t,k) \\
&\quad\cdot
\bgamma(c,k) A_s(t-k,c-k)
\end{align*}
where
\begin{align*}
A_s(x,y)=
(N(\calP)^x+\varepsilon^x) r^*(I_{2x}\perp0_s,0_{2y})
 - (N(\calP)^x-\varepsilon^x)r^*(J_{2x}\perp0_s,0_{2y}).
\end{align*}

\end{enumerate}
\end{prop}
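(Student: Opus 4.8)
The plan is to run the ``deconstruction'' of Section~3, now carrying along the radical of $S$. First note that $\G^*_T(\calP;r)$ depends only on the $GL_n(\F)$-class of $T$: for $T={}^tGT_0G$ expand $\e\{2TS\rho\}$, use $\sigma(AB)=\sigma(BA)$, and reindex $S\mapsto GS\,^tG$. So it suffices to treat $T=U_{n,d}$ and $T=\overline U_{n,d}$. For $T=U_{n,d}$ we have $\sigma(U_{n,d}S)=\sigma(S')$, where $S'$ is the upper left $d\times d$ block of $S$; writing $S={}^tG(I_r\perp0_{n-r})G$ and recording the first $d$ columns $C$ of $G$ (a rank-$d$ such $C$ extends to a basis of $\F^n$ in $\bnu(n,d)$ ways), exactly as in Section~3,
\begin{align*}
\G^*_{U_{n,d}}(\calP;r)
&=\bnu(n,d)\sum_{S'\in\F^{d,d}_{\sym}}
\left(\frac{r^*(I_r\perp0_{n-r},S')}{o(I_r\perp0_{n-r})}
-\frac{r^*(J_r\perp0_{n-r},S')}{o(J_r\perp0_{n-r})}\right)\e\{2S'\rho\},
\end{align*}
and replacing $I$ by $J$ throughout handles $T=\overline U_{n,d}$. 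After partitioning $\F^{d,d}_{\sym}$ into $GL_d(\F)$-orbits the representation numbers that occur involve only forms of rank $\le d<n$, which is the point.

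For part~(a), take $r=2t+1$, $s=n-2t-1$, so $o(I_r\perp0_s)=o(J_r\perp0_s)$. The vanishing when $d$ is even and the relation $\G^*_{\overline U_{n,d}}(\calP;2t+1)=-\G^*_{U_{n,d}}(\calP;2t+1)$ follow at once from the substitution $S\mapsto\omega S$: it carries the orbit of $I_{2t+1}\perp0_s$ onto that of $J_{2t+1}\perp0_s$ (since $\omega\,I_{2t+1}\sim J_{2t+1}$), while $\e\{2U_{n,d}(\omega S)\rho\}=\e\{2(\omega U_{n,d})S\rho\}$ with $\omega U_{n,d}\sim U_{n,d}$ for $d$ even and $\omega U_{n,d}\sim\overline U_{n,d}$ for $d$ odd, and $\e\{2(\,^tPU_{n,d}P)S\rho\}=\e\{2U_{n,d}(PS\,^tP)\rho\}$ with $S\mapsto PS\,^tP$ orbit-preserving. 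For the recursion when $d=2c+1$, partition $\F^{d,d}_{\sym}$ into $GL_d(\F)$-orbits, evaluate $r^*(I_{2t+1}\perp0_s,\cdot)-r^*(J_{2t+1}\perp0_s,\cdot)$ on $U_{d,j},\overline U_{d,j}$ by Lemma~5.1 (it vanishes for $j$ even and is, up to sign, symmetric under $U_{d,j}\leftrightarrow\overline U_{d,j}$ for $j$ odd, as for odd $n$ in Section~3), pass the orbit sums through Lemma~5.3, and match the result term-by-term with the analogous expansion of $\G^*_{U_{n-1,2c}}(\calP;2t)$; the factor $\tfrac{\bnu(n,2c+1)}{\bnu(n-1,2c)}\varepsilon^cN(\calP)^c\G^*_1(\calP)$ emerges using $\G^*_\omega=-\G^*_1$, $(\G^*_1)^2=\varepsilon N(\calP)$, and the Lemma~5.1 relation between $r^*(I_{2t+1}\perp0_s,\big<1\big>\perp Y)$ and $r^*(I_{2t}\perp0_s,Y)$.

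For part~(b), take $r=2t$, $s=n-2t$, and rewrite the summand by the even/odd-rank device of Section~3 in the presence of a radical,
\begin{align*}
&\frac{r^*(I_{2t}\perp0_s,S')}{o(I_{2t}\perp0_s)}-\frac{r^*(J_{2t}\perp0_s,S')}{o(J_{2t}\perp0_s)}\\
&\qquad=\frac{r^*(I_{2t+1}\perp0_s,\big<1\big>\perp S')-r^*(J_{2t+1}\perp0_s,\big<1\big>\perp S')}{o(I_{2t+1}\perp0_s)},
\end{align*}
which is how $o(I_{2t+1}\perp0_s)$ enters. Partition $\F^{d,d}_{\sym}$ into $GL_d(\F)$-orbits with representatives $U_{d,j},\overline U_{d,j}$; Lemma~5.1 forces $j=2k$ even, and Lemma~5.3 turns the orbit sums into $\sum_{\cls Y\in\F^{2k,2k}_{\sym}}\tfrac{r^*(\cdot,Y)}{o(Y)}\G^*_Y$. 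Substitute $\G^*_Y=\varepsilon^kN(\calP)^{k^2}h_Y$ for $Y\in\F^{2k,2k}_{\sym}$ (Theorem~1.1 and Lemma~5.4) and collapse the inner sum over $\cls Y$ with Lemma~5.4. What remains is a double sum over $k$ and an index recording how the isotropic part of $S'$ meets the radical; the second sum telescopes via $\bgamma(t,k)=N(\calP)^{2k}\bgamma(t-1,k)+\bgamma(t-1,k-1)$ and $\bbeta(t,s)=N(\calP)^s\bbeta(t-1,s)+\bbeta(t-1,s-1)$ (the $A(t,q)$, $C(t)$, $D(t,q)$ manipulations of Section~3, now carrying factors $N(\calP)^{s(\cdots)}$ from the radical), leaving the single sum $\sum_{k=0}^c$ with the combination
\begin{align*}
&(N(\calP)^{t-k}+\varepsilon^{t-k})\,r^*(I_{2(t-k)}\perp0_s,0_{2(c-k)})\\
&\qquad-(N(\calP)^{t-k}-\varepsilon^{t-k})\,r^*(J_{2(t-k)}\perp0_s,0_{2(c-k)})=A_s(t-k,c-k)
\end{align*}
left unsimplified — it is precisely the common-denominator form of $\tfrac{r^*(I_{2x}\perp0_s,0_{2y})}{o(I_{2x}\perp0_s)}-\tfrac{r^*(J_{2x}\perp0_s,0_{2y})}{o(J_{2x}\perp0_s)}$. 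Finally $\G^*_{\overline U_{n,d}}(\calP;2t)=\G^*_{U_{n,d}}(\calP;2t)$ drops out of the same computation, exactly as in the $n$-even case of Theorem~1.1 (or from $S\mapsto\omega S$ when $d$ is odd).

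The main obstacle is the final telescoping in part~(b): threading the radical parameter $s$ correctly through the nested Lemma~5.1 identities, deciding which combinatorial factors collapse and which assemble into $A_s$, and in particular balancing the $N(\calP)$-exponents — the displayed $s(2k+1)+2tk+t-k$ together with those implicit in $A_s$.
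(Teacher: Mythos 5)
Your proposal is correct and follows essentially the same route as the paper's proof: the Section-3 deconstruction onto the upper-left $d\times d$ block, partition of $\F^{d,d}_{\sym}$ into $GL_d(\F)$-orbits, Lemma 5.3 to pass to smaller Gauss sums $\G^*_Y$, and Theorem 1.1 together with Lemma 5.4 to collapse the inner sum, with $A_s(t-k,c-k)$ then read off from $r^*(U_{n+1,2t+1},U_{d+1,2k+1})-r^*(\overline U_{n+1,2t+1},U_{d+1,2k+1})$ via Lemma 5.1 and $r^*(I_r\perp 0_s,1)=N(\calP)^s r^*(I_r,1)$. The one genuine (and cleaner) variation is your substitution $S\mapsto\omega S$ to get the vanishing for even $d$ and the relation $\G^*_{\overline U_{n,d}}(\calP;2t+1)=-\G^*_{U_{n,d}}(\calP;2t+1)$, which the paper instead deduces from the vanishing of the representation-number differences and of the Lemma 5.4 sums; the only small inaccuracy is that in part (b) no further telescoping beyond Lemma 5.4's internal sum is needed --- the $A(t,q)$, $C(t)$, $D(t,q)$ manipulations you invoke belong to the proof of Theorem 1.1, since the proposition deliberately leaves $A_s$ unsimplified.
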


\noindent{\bf Remark:}  As $\bnu(2y,0)$ is the number of bases for any $2y$-dimensional space,
$r^*(T',0_{2y})=\bnu(2y,0)\cdot R^*(T',0_{2y})$ for any symmetric $T'$.

\begin{proof}
Throughout this proof, we follow the lines of argument used in Section 3. In this way we get
\begin{align*}
\G^*_{U_{n,d}}(\calP;r)
&=\bnu(n,d)\sum_{Y\in\F^{d,d}_{\sym}}
\left(\frac{r^*(U_{n,r},Y)}{o(U_{n,r},Y)}
-\frac{r^*(\overline U_{n,r})}{o(\overline U_{n,r},Y)}\right) \e\{2Y/p\},
\end{align*}
\begin{align*}
\G^*_{\overline U_{n,d}}(\calP;r)
&=\bnu(n,d)\sum_{Y\in\F^{d,d}_{\sym}}
\left(\frac{r^*(U_{n,r},Y)}{o(U_{n,r},Y)}
-\frac{r^*(\overline U_{n,r})}{o(\overline U_{n,r},Y)}\right) \e\{2YJ_d/p\}.
\end{align*}
We have $o(0_{n-r})=\bnu(n-r,n-r),$ and
$$o(U_{n,r})=o(I_r)N(\calP)^{r(n-r)}o(0_{n-r}),\  
o(\overline U_{n,r})=o(J_r)N(\calP)^{r(n-r)}\bnu(n-r,n-r).$$

First consider the case that $r=2t+1$.  Then for even $\ell$ ($\ell\le d$), we have
$$r^*(U_{n,r},U_{d,\ell})-r^*(\overline U_{n,r},U_{d,\ell})
=0=r^*(U_{n,r}\overline U_{d,\ell})-r^*(\overline U_{n,r},\overline U_{d,\ell}),$$
and for odd $\ell$ we have
$$r^*(U_{n,r},U_{d,\ell})=r^*(\overline U_{n,r},\overline U_{d,\ell}),\ 
r^*(U_{n,r},\overline U_{d,\ell})=r^*(\overline U_{n,r},U_{d,\ell}).$$
Hence using Lemma 5.3, we have
\begin{align*}
\G^*_{U_{n,d}}(\calP;2t+1)
&=\frac{\bnu(n,d)}{o(U_{n,2t+1})}
\sum_{0\le 2k+1\le d}
\left(\sum_{\cls Y\in\F^{2k+1,2k+1}_{\sym}}\frac{r^*(I_d,Y)}{o(Y)} \G^*_Y\right)\\
&\quad\cdot
(r^*(U_{n,2t+1},U_{d,2k+1})-r^*(\overline U_{n,2t+1},U_{d,2k+1})).
\end{align*}
So by Theorem 1.1 and Lemmas 5.1 and 5.4, when $d$ is even we get 
$\G^*_{U_{n,d}}(\calP;2t+1)=0$, and when $d$ is odd with $d=2c+1$, we get
\begin{align*}
\G^*_{U_{n,d}}(\calP;2t+1)
&=\frac{\bnu(n,d)}{o(U_{n,2t+1})}
\sum_{k=0}^c (-1)^k\varepsilon^{k+c} N(\calP)^{k^2+c} \bgamma(c,k) \G^*_1\\
&\quad\cdot 
(r^*(U_{n,2t+1},U_{d,2k+1})-r^*(\overline U_{n,2t+1},U_{d,2k+1})).
\end{align*}
An almost identical argument gives us 
$\G^*_{\overline U_{n,d}}(\calP;2t+1)=-\G^*_{U_{n,d}}(\calP;2t+1).$

Now consider the case that $r=2t$.
With $d=2c$ or $2c+1$, reasoning as in Section 3 gives us
\begin{align*}
\G^*_{U_{n,d}}(\calP;2t) 
&=\frac{\bnu(n,d)}{o(U_{n+1,2t+1})}
\sum_{k=0}^c(-1)^k\varepsilon^k N(\calP)^{k^2} \bgamma(c,k)\\
&\quad\cdot\left(r^*(U_{n+1,2t+1},U_{d+1,2k+1})
 - r^*(\overline U_{n+1,2t+1},U_{d+1,2k+1})\right)\\
&=
\G^*_{\overline U_{n,d}}(\calP;2t).
\end{align*}
This gives us (a) and part of (b).

To finish proving (b), we begin with the above equation, taking $d=2c$.  
It is easily seen that
$r^*(I_r\perp 0_s,1)=N(\calP)^s r^*(I_r,1),$
and consequently from Lemma 5.1 we get
\begin{align*}
&
r^*(U_{n+1,2t+1},U_{d+1,2k+1})
 - r^*(\overline U_{n+1,2t+1},U_{d+1,2k+1})\\
&\quad =
N(\calP)^{(n-2t)(2k+1)+2tk-k^2+t-k}\bmu\bdelta(t,k) A_{n-2t}(t-k,c-k)
\end{align*}
with $A_s(x,y)$ as in the statement of the proposition.
\end{proof}

\bigskip
\section{Lemmas and their proofs}
\smallskip

\begin{lem}  Take $t,d\in\Z_+$. We have:
\begin{align*}
r^*(I_{2t},1)&=N(\calP)^{t-1}(N(\calP)^t-\varepsilon^t)=r^*(I_{2t},\omega),\\
r^*(J_{2t},1)&=N(\calP)^{t-1}(N(\calP)^t+\varepsilon^t)=r^*(J_{2t},1),\\
r^*(I_{2t+1},1)&=N(\calP)^t(N(\calP)^t+\varepsilon^t)=r^*(J_{2t+1},\omega),\\
r^*(I_{2t+1},\omega)&=N(\calP)^t(N(\calP)^t-\varepsilon^t)=r^*(J_{2t+1},1);
\end{align*}
also,
\begin{align*}
r^*(I_{2t},0_d)&=N(\calP)^{d(d-1)/2}(N(\calP)^t-\varepsilon^t)\bmu\bdelta(t-1,d-1)
(N(\calP)^{t-d}+\varepsilon^t),\\
r^*(J_{2t},0_d)&=N(\calP)^{d(d-1)/2}(N(\calP)^t+\varepsilon^t)\bmu\bdelta(t-1,d-1)
(N(\calP)^{t-d}-\varepsilon^t),\\
r^*(I_{2t+1},0_d)&=N(\calP)^{d(d-1)/2}\bmu\bdelta(t,d)=r^*(J_{2t+1},0_d).
\end{align*}
\end{lem}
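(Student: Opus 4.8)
The plan is to reduce everything to two classical facts about the residue field $\F$ (which has $q=N(\calP)$ elements and odd characteristic): the number of solutions of a nondegenerate quadratic equation, and a recursion for the number of totally isotropic subspaces.

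First I would recall, or re-derive by induction on $n$ (splitting off a hyperbolic plane, using $\#\{x\in\F:x^2=c\}=1+\eta(c)$ and a quadratic Gauss sum in the base case), the standard count $N_X(a)=\#\{v\in\F^{n,1}:{}^tvXv=a\}$ for nondegenerate $X$: writing $\eta$ for the quadratic character of $\F$ with $\eta(0)=0$, for $n=2t$ this is $q^{2t-1}-\eta((-1)^t\det X)q^{t-1}$ when $a\ne0$ and $q^{2t-1}+(q-1)\eta((-1)^t\det X)q^{t-1}$ when $a=0$, while for $n=2t+1$ it is $q^{2t}+\eta((-1)^t a\det X)q^{t}$. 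Since $\det I_n=1$ and $\det J_n=\omega$ is a nonsquare, while $\eta(-1)=\varepsilon$ and $\eta(\omega)=-1$, substituting $a=1$ and $a=\omega$ gives all the claimed values of $r^*(\cdot,1)$, $r^*(\cdot,\omega)$ at once, using that a nonzero column of $\F^{n,1}$ automatically has rank $1$, so $r^*(X,a)=N_X(a)$ for $a\ne0$. (This also explains the coincidences listed in the statement: for even $n$ the value $N_X(a)$ does not depend on $a\in\F^\times$, and for odd $n$ moving $a$ between a square and a nonsquare interchanges the $I_n$ and $J_n$ answers.)

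For the representations of $0_d$ I would first record that $r^*(X,0_d)=\bnu(d,0)\,R^*(X,0_d)$, where $R^*(X,0_d)$ is the number of $d$-dimensional totally isotropic subspaces of the space with Gram matrix $X$: a rank-$d$ matrix $C\in\F^{n,d}$ with ${}^tCXC=0_d$ is exactly an ordered basis of the (totally isotropic, $d$-dimensional) subspace spanned by its columns, and each such subspace has $\bnu(d,0)$ ordered bases. I then set up a recursion on $d$: choose a nonzero isotropic vector $v$ (there are $N_X(0)-1$ of these), observe that the quadratic and bilinear forms descend to the nondegenerate space $\overline V=v^\perp/\langle v\rangle$ of dimension $n-2$, and note that a totally isotropic subspace of $V$ through $v$ is exactly the preimage of one in $\overline V$; counting ordered bases and their lifts to $v^\perp$ gives
\[
r^*(X,0_d)=\bigl(N_X(0)-1\bigr)\,q^{d-1}\,r^*(\overline X,0_{d-1}),\qquad r^*(X,0_0)=1 .
\]
The structural point here is that $V\cong\overline V\perp\Hyp$ for a hyperbolic plane containing $v$, so $\overline V$ has the same determinant-type as $V$ when $n$ is even and is again of odd type when $n$ is odd; hence the induction applies to $\overline X$, and in the odd case the outcome is the same for $I_{n-2}$ and $J_{n-2}$.

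Finally I would iterate the recursion. For $n=2t+1$ every stage contributes $N_X(0)-1=q^{2t}-1$ (with $t$ decreasing by $1$ each time), so $r^*(X,0_d)=q^{d(d-1)/2}\prod_{j=0}^{d-1}(q^{2(t-j)}-1)=q^{d(d-1)/2}\bmu\bdelta(t,d)$ for $X\in\{I_{2t+1},J_{2t+1}\}$. For $n=2t$, set $\delta=\varepsilon^t$ if $X=I_{2t}$ and $\delta=-\varepsilon^t$ if $X=J_{2t}$ (so $V$ is of $+$ type exactly when $\delta=1$); then stage $j$ contributes $(q^{t-j}-\delta)(q^{t-1-j}+\delta)$, whence $r^*(X,0_d)=q^{d(d-1)/2}\prod_{j=0}^{d-1}(q^{t-j}-\delta)(q^{t-1-j}+\delta)$, and pulling out the $j=0$ and $j=d-1$ factors rewrites this as $q^{d(d-1)/2}(q^t-\delta)\,\bmu\bdelta(t-1,d-1)\,(q^{t-d}+\delta)$, which is precisely the asserted formula (with $+\varepsilon^t$ for $I_{2t}$ and $-\varepsilon^t$ for $J_{2t}$). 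The main obstacle I anticipate is entirely bookkeeping: correctly tracking the determinant/type through the passage to $v^\perp/\langle v\rangle$ (which is what forces the uniform $\pm\varepsilon^t$ in the even case), verifying that total isotropy is a condition only on classes modulo $v$ so that the lifting factor is genuinely $q^{d-1}$, and telescoping the products into the $\bmu$, $\bdelta$, $\bmu\bdelta$ shapes; the geometry underneath is completely standard.
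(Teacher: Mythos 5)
Your proposal is correct and follows essentially the same route as the paper: the values $r^*(\cdot,1)$ and $r^*(\cdot,\omega)$ come from the standard point counts for nondegenerate quadrics (which the paper simply cites from Gerstein, Theorems 2.59--2.60, rather than re-deriving), and $r^*(\cdot,0_d)$ is obtained by the same recursion on $d$ --- pick a nonzero isotropic vector, split off a hyperbolic plane (your $v^{\perp}/\langle v\rangle$ is the same reduction), track the type of the complement, and multiply by the lifting factor $N(\calP)^{d-1}$. Your uniform bookkeeping with $\delta=\pm\varepsilon^{t}$ telescopes correctly to the stated $\bmu\bdelta$ expressions, so no gap.
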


\begin{proof}
The first collection of formulas 
are from Theorems 2.59 and 2.60 of \cite{Ger}.
For the second collection of formulas, 
we begin with  Theorems 2.59 and 2.60 of \cite{Ger}, giving us formulas for $r(I_t,0)=r^*(I_t,0)+1$ and $r(J_t,0)=r^*(J_t,0)+1$.
Now consider the case that $V$ is a $2t$-dimensional space over $\F$ equipped with a quadratic form $Q_V$ given by $I_{2t}$ relative to some basis for $V$. 
So $r^*(I_{2t},0_d)$ is the number of all (ordered) bases for $d$-dimensional, totally isotropic subspaces of $V$.  
(Recall that a subspace $W$ of $V$ is totally isotropic if $Q_V$ restricts to 0 on $W$.)
Suppose that $d>1$; we construct all bases for $d$-dimensional, totally isotropic subspaces of $V$ as follows.
Choose an isotropic vector $x$ from $V$ (so $x\not=0$ and $Q_V(x)=0$; note that this is not possible if $t=1$ and $\varepsilon=-1$).  Then as $V$ is a regular space, there is some $y\in V$ so that $y$ is not orthogonal to $x$; hence (by Theorem 2.23 \cite{Ger})
 $x,y$ span a hyperbolic plane, and (by Theorem 2.17 \cite{Ger}), this hyperbolic plane splits $V$, giving us
$V=(\F x\oplus\F y)\perp V'$ where $V'$ is hyperbolic if and only if $V$ is.
We have $\disc V=\varepsilon \disc V'$ and so the quadratic form on $V'$ is given by $I_{2(t-1)}$ if $\varepsilon=1$, and by
$J_{2(t-1)}$ if $\varepsilon=-1$.
The number of all bases for $d$-dimensional, totally isotropic subspaces of $V$ with $x$ as the first basis element is
$N(\calP)^{d-1} r^*(I_{2(t-1)},0_{d-1})$ if $\varepsilon=1$, and
$N(\calP)^{d-1} r^*(J_{2(t-1)},0_{d-1})$ otherwise.  The formula claimed now follows by induction on $d$.

Virtually identical arguments yield the formulas when $I_{2t}$ is replaced by $J_{2t}$ or $I_{2t+1}$ or $J_{2t+1}$.
\end{proof}

\begin{lem}  
Suppose that $m\ge 0$.  We have
\begin{align*}
&\sum_{s=0}^m (-1)^s N(\calP)^{(2m+1)(m-s)+s^2}\bbeta\bdelta(m,m-s)\\
&\quad=r(I_{2m+1},0_{2m+1})
=r(J_{2m+1},0_{2m+1}),
\end{align*}
\begin{align*}
&\sum_{s=0}^m(-1)^s N(\calP)^{2m(m-s)+s(s-1)}\bbeta(m,m-s)\bdelta(m-1,m-s)\\
&\quad=\begin{cases} r(I_{2m},0_{2m})&\text{if $\varepsilon^m=1$,}\\
r(J_{2m},0_{2m})&\text{if $\varepsilon^m=-1$,}\end{cases}\\
&\sum_{s=1}^m (-1)^{s+1} N(\calP)^{2m(m-s)+s(s-1)}\bbeta(m-1,m-s)\bdelta(m,m-s)\\
&\quad=\begin{cases} r(J_{2m},0_{2m})&\text{if $\varepsilon^m=1$,}\\
r(I_{2m},0_{2m})&\text{if $\varepsilon^m=-1$.}\end{cases}
\end{align*}
\end{lem}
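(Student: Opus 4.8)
The plan is to reduce all three identities to a single evaluation of a representation number $r(X,0_n)$ with $X\in\{I_n,J_n\}$ and $n$ equal to $2m+1$ or $2m$, and then to reconcile the resulting expression with the stated alternating sum by two applications of the finite $q$-binomial theorem. Throughout write $q=N(\calP)$.

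\emph{Step 1: a counting formula for $r(X,0_n)$.} If $X$ is the Gram matrix of a regular quadratic $\F$-space $V$ of dimension $n$, then $r(X,0_n)$ counts the $C'\in\F^{n,n}$ whose columns span a totally isotropic subspace of $V$. Sorting $C'$ by its rank $a$, factoring $C'=CM$ with $C\in\F^{n,a}$ primitive (so automatically $^tCXC=0$) and $M\in\F^{a,n}$ of rank $a$, and noting that each $C'$ of rank $a$ arises from exactly $\bnu(a,0)$ such pairs $(C,M)$, one obtains
$$r(X,0_n)=\sum_{a\ge 0}R^*(X,0_a)\,\ell(n,a),\qquad \ell(n,a):=\prod_{i=0}^{a-1}(q^{n}-q^{i}),$$
where $\ell(n,a)$ is the number of rank-$a$ matrices in $\F^{a,n}$, i.e.\ the number of $n$-tuples spanning a fixed $a$-dimensional $\F$-space, and $R^*(X,0_a)=r^*(X,0_a)/\bnu(a,0)$ is the number of $a$-dimensional totally isotropic subspaces. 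By Lemma 5.1 these coefficients are explicit: $R^*(I_{2m+1},0_a)=R^*(J_{2m+1},0_a)=\bbeta(m,a)\bdelta(m,a)$ (in particular $r(I_{2m+1},0_{2m+1})=r(J_{2m+1},0_{2m+1})$); and for $n=2m$, exactly one of $I_{2m},J_{2m}$ is hyperbolic, namely $I_{2m}$ if $\varepsilon^m=1$ and $J_{2m}$ if $\varepsilon^m=-1$ (this is visible from the factor $q^{m-d}\pm\varepsilon^m$ in Lemma 5.1, which vanishes at $d=m$ in the non-hyperbolic case), with $R^*(\cdot,0_a)=\bbeta(m,a)\bdelta(m-1,a)$ for the hyperbolic one and $R^*(\cdot,0_a)=\bbeta(m-1,a)\bdelta(m,a)$ for the other. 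All sums are finite since $\bbeta(t,s)=0$ for $s>t$.

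\emph{Step 2: passing to the alternating form.} Möbius inversion over the subspace lattice (equivalently, $q$-binomial inversion of $q^{na}=\sum_{b\le a}\bbeta(a,b)\,\ell(n,b)$) gives $\ell(n,a)=\sum_{b=0}^{a}(-1)^{a-b}q^{\binom{a-b}{2}}\bbeta(a,b)\,q^{nb}$. Substituting this into the formula of Step 1, interchanging the two sums, using $\bbeta(a,b)\bbeta(m,a)=\bbeta(m,b)\bbeta(m-b,a-b)$, and splitting $\bdelta(m,b)$ (resp.\ $\bdelta(m-1,b)$) off from $\bdelta(m,a)$ (resp.\ $\bdelta(m-1,a)$), every case collapses to the single identity
$$\sum_{i=0}^{N}(-1)^{i}q^{\binom{i}{2}}\bbeta(N,i)\prod_{\ell=0}^{i-1}\bigl(q^{\,c-\ell}+1\bigr)=(-1)^{N}q^{Nc},$$
applied with $(N,c)=(m-b,m-b)$ for $X=I_{2m+1}$, with $(N,c)=(m-b,m-b-1)$ for the hyperbolic form in dimension $2m$, and with $(N,c)=(m-b-1,m-b)$ for the non-hyperbolic one. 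Re-indexing by $s=m-b$ then produces precisely the exponents $(2m+1)(m-s)+s^{2}$ and $2m(m-s)+s(s-1)$ of the statement, and the vanishing of the $b=m$ term when the Witt index is $m-1$ (because then $\bbeta(m-1,m)=0$) is exactly why the third sum starts at $s=1$.

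\emph{Step 3.} The displayed identity is proved by two uses of the finite $q$-binomial theorem: write $\prod_{\ell=0}^{i-1}(q^{c-\ell}+1)=\sum_{k=0}^{i}q^{\binom{k}{2}}\bbeta(i,k)q^{\,k(c-i+1)}$, substitute, interchange the two sums, apply $\bbeta(N,i)\bbeta(i,k)=\bbeta(N,k)\bbeta(N-k,i-k)$, and observe that the inner sum $\sum_{j}(-1)^{j}q^{\binom{j}{2}}\bbeta(N-k,j)=\prod_{i=0}^{N-k-1}(1-q^{i})$ vanishes unless $k=N$; the surviving $k=N$ term telescopes (after collecting $\binom{N}{2}+N(c+1)-\binom{N+1}{2}=Nc$) to $(-1)^{N}q^{Nc}$. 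This last identity is the only genuine computation; everything else is bookkeeping, and the one thing to watch is keeping the three choices of $(N,c)$, the shift between $\bdelta(m,\cdot)$ and $\bdelta(m-1,\cdot)$, and the parity of $\varepsilon^{m}$ consistent across the cases.
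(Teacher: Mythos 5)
Your argument is correct, and it reaches the lemma by a genuinely different organization than the paper's. The paper starts from the alternating sum itself, reads it as a signed count of pairs (an $n$-tuple of vectors, a totally isotropic subspace $W\in\W_{m-s}$ containing them), and then, for a fixed tuple whose span $W'$ is totally isotropic of dimension $\ell$, invokes Witt's theorem to split $V=(W'\oplus W'')\perp V'$ and parametrize the $W$ containing $W'$ by the totally isotropic subspaces of $V'$; the resulting inner sum $A(m-\ell,m-\ell-1)$ is shown to equal $1$ by a Pascal-type recursion on $\bbeta$. You instead start from $r(X,0_n)$, stratify the matrices by the dimension $a$ of their column span to get the positive expansion $\sum_a R^*(X,0_a)\,\ell(n,a)$, and pass to the alternating form by two purely formal steps: M\"obius inversion of $q^{na}=\sum_b\bbeta(a,b)\ell(n,b)$ on the full subspace lattice, and the evaluation $\sum_i(-1)^iq^{\binom{i}{2}}\bbeta(N,i)\bdelta(c,i)=(-1)^Nq^{Nc}$, whose proof in your Step 3 I checked (the cross term $kj$ cancels, the inner sum kills all $k<N$, and $2\binom{N}{2}+N(c-N+1)=Nc$). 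The trade-off is clear: the paper's route needs the geometric fact that totally isotropic subspaces containing a fixed one correspond to those of a smaller split space, but uses Lemma 5.1 only lightly; your route needs no geometry beyond identifying which of $I_{2m},J_{2m}$ is hyperbolic, but leans entirely on the closed formulas for $R^*(X,0_a)$ and on their factoring compatibly as $\bbeta\bdelta(m,a)$, $\bbeta(m,a)\bdelta(m-1,a)$, $\bbeta(m-1,a)\bdelta(m,a)$, so that all three cases reduce to one identity with shifted parameters $(N,c)$ --- your choice $N=m-b-1$ in the non-hyperbolic even case correctly accounts for both the sign $(-1)^{s+1}$ and the fact that the third sum starts at $s=1$.
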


\begin{proof}
Suppose that $V$ is an $n$-dimensional vector space over $\F$ equipped with a quadratic form given by $Q_V=I_n$ or $J_n$.  
Then $r(Q_V,0_n)$ is the number of (ordered) $x_1,\ldots,x_n\in V$ so that $\text{span}\{x_1,\ldots,x_n\}$ is totally isotropic.
As $\bnu(d,0)$ is the number of bases for any given dimension $d$ space over $\F$, the number of dimension $d$ totally isotropic subspaces of $V$ is
$$\varphi_d(V)=r^*(Q_V,0_d)/\bnu(d,0).$$

We treat the case that $V\simeq\Hyp^m$, meaning that 
$\dim V=2m$ and the quadratic form on $V$ is given by 
$I_{2m}$ if $\varepsilon^m=1$, and by $J_{2m}$ otherwise
(analogous arguments treat the other cases).  Slightly abusing notation, we write $(x_1,\ldots,x_{2m})\subseteq V$ to mean that $(x_1,\ldots,x_{2m})$ is an ordered $2m$-tuple of vectors from $V$.  We set
$$\W_{m-s}=\{\text{dimension } m-s \text{ totally isotropic subspaces } W \text{ of }V\},$$
and we let
$${\mathbb 1}_W(x_1,\ldots,x_{2m})=
\begin{cases}1&\text{if $x_1,\ldots,x_{2m}\in W$,}\\
0&\text{otherwise}.\end{cases}$$
Thus for $(x_1,\ldots,x_{2m})\subseteq V$,
$\sum_{W\in\W_{m-s}} {\mathbb 1}_W(x_1,\ldots,x_{2m})$ is the number of elements of $\W_{m-s}$ containing $x_1,\ldots,x_{2m}$, and, noting that 
$N(\calP)^{2m(m-s)}$ is the number of (ordered) $2m$-tuples of vectors in each $W\in\W_{m-s}$, we have
$$\sum_{(x_1,\ldots,x_{2m})\subseteq V} \left(\sum_{W\in\W_{m-s}}
{\mathbb 1}_W(x_1,\ldots,x_{2m})\right)
 = N(\calP)^{2m(m-s)}\varphi_{m-s}(V).$$
So
\begin{align*}
\psi(V):=
&\sum_{s=0}^m (-1)^s N(\calP)^{s(s-1)+2m(m-s)}\varphi_{m-s}(V)\\
=&\sum_{(x_1,\ldots,x_{2m})\subseteq V}\left(\sum_{s=0}^m (-1)^s N(\calP)^{s(s-1)}
\sum_{W\in\W_{m-s}}
{\mathbb 1}_W(x_1,\ldots,x_{2m})\right).
\end{align*}

Fix $(x'_1,\ldots,x'_{2m})\subseteq V$; let $W'$ be the subspace spanned by $x'_1,\ldots,x'_{2m}$, and set $\ell=\dim W'$.  If $W'$ is not totally isotropic then 
${\mathbb 1}_W(x'_1,\ldots,x'_{2m})=0$ for all totally isotropic $W$.
So suppose that $W'$ is totally isotropic.  Then repeatedly using Theorems 2.19, 2.23, 2.52 of \cite{Ger} and the assumption that $V$ is regular, we find that there is a dimension $\ell$ subspace $W''$ so that $W'\oplus W''\simeq\Hyp^{\ell}$ and
$V=(W'\oplus W'')\perp V'$ where  $V'\simeq \Hyp^{m-\ell}$.
Hence the number of $W\in\W_{m-s}$ that contain $W'$ is $\varphi_{m-s-\ell}(V')$.
Therefore, using Lemma 5.1 and the above formula for $\varphi_{m-s-\ell}(V')$, we have
\begin{align*}
&\sum_{s=0}^{m-\ell} (-1)^s N(\calP)^{s(s-1)} \sum_{W\in\W_{m-s}}
{\mathbb 1}(x'_1,\ldots,x'_{2m})
=A(m-\ell,m-\ell-1)
\end{align*}
where
$$A(t,k)=\sum_{s=0}^t (-1)^sN(\calP)^{s(s+k-t)}\bdelta(k,t-s)\bbeta(t,t-s).$$
We argue by induction on $t$ to show that for any $k$ and $t\ge 0$,
we have $A(t,k)=1$.
Clearly $A(0,k)=1$ for all $k$.  
So fix $t\ge0$ and suppose that $A(t,k)=1$ for all $k$.
Hence we have
\begin{align*}
1
&=(N(\calP)^k+1)\sum_{s=0}^t(-1)^sN(\calP)^{s(s+k-1-t)}
\bdelta(k-1,t-s)\bbeta(t,t-s)\\
&\quad -N(\calP)^k\sum_{s=0}^t (-1)^s N(\calP)^{s(s+k-t)}\bdelta(k,t-s)\bbeta(t,t-s).
\end{align*}
Notice that in the first sum in the above equality, we can allow $s$ to vary from $0$ to $t+1$ (since $\bbeta(t,-1)=0$), and in the second sum we can allow $s$ to vary from $-1$ to $t$ (since $\bbeta(t,t+1)=0$).
Also, we know that 
$$(N(\calP)^k+1)\bdelta(k-1,t-s)=\bdelta(k,t-s+1);$$
so replacing $s$ by $s-1$ in the second sum and using that
$$\bbeta(t,t-s)+N(\calP)^{t+1-s}\bbeta(t,t+1-s)=\bbeta(t+1,t+1-s),$$
we find that $A(t+1,k)=1$ for all $k$.  
Hence  $\psi$ counts  $(x_1,\ldots,x_{2m})\subseteq V$
 zero times if
$\text{span}\{x_1,\ldots,x_{2m}\}$ is not totally isotropic, and once otherwise.  Thus $\psi=r(Q_V,0_{2m}).$
\end{proof}

\begin{lem}  
Fix $d\in\Z_+$ and $\ell\in\Z$ so that $0\le \ell<d$.
Then
$$\sum_{Y\sim U_{d,\ell}} \e\{2Y\rho\}
-\sum_{Y\sim \overline U_{d,\ell}} \e\{2Y\rho\}
=\sum_{\cls Y'\in\F^{\ell,\ell}_{\sym}} \frac{r^*(I_d,Y')}{o(Y')} \G^*_{Y'}(\calP I_{\ell})$$
and
$$\sum_{Y\sim U_{d,\ell}} \e\{2YJ_d\rho\}
-\sum_{Y\sim \overline U_{d,\ell}} \e\{2YJ_d\rho\}
=\sum_{\cls Y'\in\F^{\ell,\ell}_{\sym}} \frac{r^*(J_d,Y')}{o(Y')} \G^*_{Y'}(\calP I_{\ell}),$$
where $\cls Y'$ varies over a set of representatives for the $GL_{\ell}(\F)$-orbits in $\F^{\ell,\ell}_{\sym}$.
\end{lem}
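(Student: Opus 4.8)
The plan is to unfold each side by a parametrization entirely in the spirit of the reductions in Section~3, but carried out abstractly. The key preliminary observation is the following normal-form count: for $X\in\F^{d,d}_{\sym}$ one has $X\sim U_{d,\ell}$ if and only if $X={}^tC\,I_\ell\,C$ for some $C\in\F^{\ell,d}$ with $\rank C=\ell$, and for each such $X$ the number of $C$ producing it equals $o(I_\ell)$; the same holds with $U_{d,\ell},I_\ell$ replaced by $\overline U_{d,\ell},J_\ell$. The "if'' direction and the rank claim are immediate; for the converse and for the multiplicity, choose $P\in GL_d(\F)$ with $CP=(I_\ell\ \ 0)$, so that ${}^tC I_\ell C=X$ is equivalent to ${}^t(CP)\,I_\ell\,(CP)={}^tPXP$, and comparing block decompositions forces $CP=(A\ \ 0)$ with ${}^tA A=I_\ell$; the number of such $A$ is $r^*(I_\ell,I_\ell)=o(I_\ell)$.

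First I would substitute this parametrization into the two sums on the left, using throughout that $\sigma(AB)=\sigma(BA)$ and that $\e\{2M\rho\}=\e\{2N\rho\}$ whenever $M\equiv N\pmod{\calP}$. For the first identity,
\[
\sum_{Y\sim U_{d,\ell}}\e\{2Y\rho\}
=\frac{1}{o(I_\ell)}\sum_{\substack{C\in\F^{\ell,d}\\ \rank C=\ell}}\e\{2\,{}^tC I_\ell C\,\rho\}
=\frac{1}{o(I_\ell)}\sum_{\substack{C\in\F^{\ell,d}\\ \rank C=\ell}}\e\{2\,(C\,{}^tC)\,\rho\},
\]
and since $C\mapsto{}^tC$ is a rank-preserving bijection of $\F^{\ell,d}$ with $\F^{d,\ell}$ and $C\,{}^tC={}^t({}^tC)\,I_d\,({}^tC)$, the last sum equals $o(I_\ell)^{-1}\sum_{Y'\in\F^{\ell,\ell}_{\sym}}r^*(I_d,Y')\,\e\{2Y'\rho\}$. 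The same computation, now using $\sigma(J_\ell\,C\,{}^tC)=\sigma((C\,{}^tC)\,J_\ell)$, gives $\sum_{Y\sim\overline U_{d,\ell}}\e\{2Y\rho\}=o(J_\ell)^{-1}\sum_{Y'}r^*(I_d,Y')\,\e\{2Y'J_\ell\rho\}$. For the second identity one carries the extra factor $J_d$ through the cyclic shift: with $Y={}^tC I_\ell C$ one has $\sigma(YJ_d\rho)=\sigma\bigl(I_\ell\,(C J_d\,{}^tC)\,\rho\bigr)=\sigma\bigl((C J_d\,{}^tC)\,\rho\bigr)$, and $C J_d\,{}^tC={}^t({}^tC)\,J_d\,({}^tC)$ runs over $\F^{\ell,\ell}_{\sym}$ with multiplicity $r^*(J_d,\cdot)$, the $\overline U_{d,\ell}$ term producing an extra $J_\ell$ as before. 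In both cases the left side has thus become
\[
\frac{1}{o(I_\ell)}\sum_{Y'}r^*(X_d,Y')\,\e\{2Y'\rho\}-\frac{1}{o(J_\ell)}\sum_{Y'}r^*(X_d,Y')\,\e\{2Y'J_\ell\rho\},
\]
where $X_d=I_d$ for the first identity and $X_d=J_d$ for the second.

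Second I would collapse these $Y'$-sums onto $GL_\ell(\F)$-orbits. Since $r^*(X_d,\cdot)$ is constant on an orbit $\cls Y'_0$, and the same parametrize-and-cycle step gives $\sum_{Y'\sim Y'_0}\e\{2Y'\rho\}=\frac{o(I_\ell)}{o(Y'_0)}\sum_{Z\sim I_\ell}\e\{2Y'_0 Z\rho\}$ and $\sum_{Y'\sim Y'_0}\e\{2Y'J_\ell\rho\}=\frac{o(J_\ell)}{o(Y'_0)}\sum_{Z\sim J_\ell}\e\{2Y'_0 Z\rho\}$, the whole left side reduces to
\[
\sum_{\cls Y'_0}\frac{r^*(X_d,Y'_0)}{o(Y'_0)}\Bigl(\sum_{Z\sim I_\ell}\e\{2Y'_0 Z\rho\}-\sum_{Z\sim J_\ell}\e\{2Y'_0 Z\rho\}\Bigr).
\]
Finally I would observe that a rank-$\ell$ matrix $Z\in\F^{\ell,\ell}_{\sym}$ is $\sim I_\ell$ or $\sim J_\ell$ according as $\det Z$ is or is not a square in $\F$, while $\left(\frac{\det Z}{\calP}\right)=0$ when $\rank Z<\ell$; hence the parenthesis equals $\sum_{Z\in\F^{\ell,\ell}_{\sym}}\left(\frac{\det Z}{\calP}\right)\e\{2Y'_0 Z\rho\}=\G^*_{Y'_0}(\calP)$, which is exactly the asserted right-hand side.

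The one step demanding genuine care is the multiplicity bookkeeping: the uniform preimage counts $o(I_\ell)$ and $o(J_\ell)$, the identification of the fibers of $C\mapsto C\,{}^tC$ (and of $G\mapsto G\,{}^tG$ on $GL_\ell(\F)$) with the representation numbers via transposition, and keeping the factors $o(I_\ell)$, $o(J_\ell)$, $o(Y'_0)$ in balance as the orbit sums are reassembled. Everything else is iterated use of $\sigma(AB)=\sigma(BA)$ together with the reduction modulo $\calP$ recalled in the introduction.
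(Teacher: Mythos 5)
Your argument is correct and is essentially the paper's own proof: both rest on the cyclic trace identity $\sigma(AB)=\sigma(BA)$ to convert the orbit sums into sums over $Y'\in\F^{\ell,\ell}_{\sym}$ weighted by $r^*(I_d,Y')$ or $r^*(J_d,Y')$, then collapse onto $GL_\ell(\F)$-orbits and reassemble $\sum_{Z\sim I_\ell}-\sum_{Z\sim J_\ell}$ into $\G^*_{Y'}$. The only cosmetic difference is that you parametrize the orbit of $U_{d,\ell}$ by rank-$\ell$ matrices $C\in\F^{\ell,d}$ with fiber count $o(I_\ell)$, whereas the paper sums over all of $GL_d(\F)$ and divides by $o(U_{d,\ell})=o(I_\ell)\bnu(d,\ell)$ --- the same bookkeeping in a slightly different order.
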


\begin{proof}
We first consider the sum over $Y\sim \overline U_{d,\ell}$.  
We know that for $G\in GL_{d}(\F)$ and $G'$ in the orthogonal group of 
$\overline U_{d,\ell}$, we have 
$^t(G'G)\overline U_{d,\ell}(G'G)=\,^tG\overline U_{d,\ell}G$, so when we let $G$ vary over $GL_{d}(\F)$, each element in the orbit of $\overline U_{d,\ell}$ appears exactly $o(\overline U_{d,\ell})$ times.  Also,
recall that with $\sigma$ denoting the matrix trace map, we have 
$\sigma(^tG\overline U_{d,\ell}G)=\sigma(\overline U_{d,\ell}GI_d\,^tG)$
and $\sigma(\overline U_{d,\ell}GI_d\,^tG)=\sigma(J_{\ell}Y')$ where $Y'$ is the upper left $\ell\times \ell$ block of $GI_d\,^tG$.
So  we have
\begin{align*}
\sum_{Y\sim\overline U_{d,\ell}}\e\{2Y\rho\}
&=\frac{1}{o(\overline U_{d,\ell})} 
\sum_{G\in GL_d(\F)} \e\{2\,^tG\overline U_{d,\ell}G\rho\}\\
&=\frac{\bnu(d,\ell)}{o(\overline U_{d,\ell})}
\sum_{Y'\in\F^{\ell,\ell}_{\sym}} r^*(I_d,Y') \e\{2 J_{\ell} Y'\rho\}
\end{align*}
since
$$r^*(I_d,Y')=\#\{C\in\F^{d,\ell}: \ ^tCC=Y',\ \rank C=\ell\ \},$$
and the number of ways to extend $C$ to an element of $GL_{d}(\F)$ is $\bnu(d,\ell)$.
Now, as $G$ varies over $GL_{\ell}(\F)$, $^tGY'G$ varies $o(Y')$ times over the elements in $\cls Y'$.  Also, by Lemma 5.1, we have
$o(\overline U_{d,\ell})=o(J_{\ell})\bnu(d,\ell)$.
Hence
\begin{align*}
\sum_{Y\sim\overline U_{d,\ell}} \e\{2Y\rho\}
&= \frac{1}{o(J_{\ell})} 
\sum_{G\in GL_d(\F)} \sum_{\cls Y'\in\F^{\ell,\ell}_{\sym}}
\frac{r^*(I_d,Y')}{o(Y')}\e\{2G J_{\ell}\,^tGY'\rho\}\\
&= \sum_{\cls Y'\in\F^{\ell,\ell}_{\sym}}\frac{r^*(I_d,Y')}{o(Y')}
\sum_{X\sim J_{\ell}}\e\{2XY'\rho\}
\end{align*}
where for the last equality we used that as $G$ varies over $GL_{\ell}(\F)$,
$GJ_{\ell}\,^tG$ varies $o(J_{\ell})$ times over the elements in the orbit of $J_{\ell}$.

The analysis of 
$$\sum_{Y\sim U_{d,\ell}}\e\{2Y\rho\},\ 
\sum_{Y\sim U_{d,\ell}}\e\{2YJ_d\rho\},\text{ and }
\sum_{Y\sim \overline U_{d,\ell}}\e\{2YJ_d\rho\}$$
follow in a virtually identical manner.  Then we note that
$$\sum_{X\sim I_{\ell}}\e\{2XY'\rho\}-\sum_{X\sim J_{\ell}}\e\{2XY'\rho\}
=\G^*_{Y'},$$
completing the proof.
\end{proof}

\begin{lem}  Suppose that $0< \ell\le d$; take $c$ so that $d$ is $2c$ or $2c+1$.
Take $Y\in\F^{\ell,\ell}_{\sym}$, and take $b$ so that $\rank Y$ is $2b$ or $2b+1$.
\begin{enumerate}
\item[(a)]  Suppose that $\ell=2k$; set
$$h_Y=(-1)^b\cdot\frac{\bmu(2(k-b),2(k-b))}{\bmu\bdelta(k-b,k-b)}.$$
Then
$$\sum_{\cls Y\in\F^{\ell,\ell}_{\sym}}\frac{r^*(I_d,Y)}{o(Y)}\,h_Y=
\sum_{\cls Y\in\F^{\ell,\ell}_{\sym}}\frac{r^*(J_d,Y)}{o(Y)}\,h_Y=
(-1)^k\bgamma(c,k).$$
\item[(b)]  Suppose that $\ell=2k+1$; set
$$h_Y=\begin{cases}
(-1)^b\varepsilon^b N(\calP)^{-b}\cdot\frac{\bmu(2(k-b),2(k-b))}{\bmu\bdelta(k-b,k-b)}
&\text{if $Y\sim I_{2b+1}\perp 0_{2(k-b)}$,}\\
(-1)^{b+1}\varepsilon^b N(\calP)^{-b}\cdot\frac{\bmu(2(k-b),2(k-b))}{\bmu\bdelta(k-b,k-b)}
&\text{if $Y\sim J_{2b+1}\perp 0_{2(k-b)}$,}\\
0&\text{if $\rank Y=2b$.}
\end{cases}$$
Then when $d=2c$,
$$\sum_{\cls Y\in\F^{\ell,\ell}_{\sym}}\frac{r^*(I_d,Y)}{o(Y)}\,h_Y=
\sum_{\cls Y\in\F^{\ell,\ell}_{\sym}}\frac{r^*(J_d,Y)}{o(Y)}\,h_Y=
0,$$
and when $d=2c+1$,
\begin{align*}
\sum_{\cls Y\in\F^{\ell,\ell}_{\sym}}\frac{r^*(I_d,Y)}{o(Y)}\,h_Y
&=
-\sum_{\cls Y\in\F^{\ell,\ell}_{\sym}}\frac{r^*(J_d,Y)}{o(Y)}\,h_Y\\
&=
(-1)^k\varepsilon^cN(\calP)^{c-2k}
\bgamma(c,k).
\end{align*}
\end{enumerate}
\end{lem}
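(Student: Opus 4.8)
The plan is a purely combinatorial argument using only Lemmas~5.1 and 5.3 and the basic theory of quadratic forms over $\F$; Theorem~1.1 is \emph{not} invoked (rather, Lemma~5.4 feeds into its proof).

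\emph{Step 1 (geometric reinterpretation).} For a $GL_\ell(\F)$-orbit $\cls Y$ in $\F^{\ell,\ell}_{\sym}$, both $r^*(I_d,Y)$ and $o(Y)$ are constant on the orbit, and, exactly as in the proof of Lemma~5.3, $r^*(I_d,Y)/o(Y)$ equals the number of $\ell$-dimensional subspaces $W$ of the quadratic space $(\F^d,I_d)$ whose restricted form $I_d|_W$ lies in the class $\cls Y$ (each such $W$ has exactly $o(Y)$ ordered bases with Gram matrix $Y$). Since $h_Y$ is a class function of $I_d|_W$ — in case (a) depending only on $b=\lfloor(\rank I_d|_W)/2\rfloor$, and in case (b) on $b$ together with the $I$- versus $J$-type of the nondegenerate part — we obtain
$$\sum_{\cls Y\in\F^{\ell,\ell}_{\sym}}\frac{r^*(I_d,Y)}{o(Y)}\,h_Y=\sum_{\dim W=\ell}h_{I_d|_W},$$
and likewise with $I_d$ replaced by $J_d$.

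\emph{Step 2 (the $\omega$-scaling symmetry).} Multiplying a form by the nonsquare $\omega$ multiplies every restricted form by $\omega$; over $\F$ one has $\omega U_{\ell,2b+1}\simeq\overline U_{\ell,2b+1}$ but $\omega U_{\ell,2b}\simeq U_{\ell,2b}$, and also $\omega I_{2c}\simeq I_{2c}$ while $\omega I_{2c+1}\simeq J_{2c+1}$. Fixing a linear isomorphism $\Phi$ realising the appropriate scaling and replacing $W$ by $\Phi W$ therefore yields: in case (a) with $d$ odd, the $I_d$-sum equals the $J_d$-sum (the rank, hence $h$, is unchanged); in case (b) with $d$ even, $h_{I_d|_{\Phi W}}=-h_{I_d|_W}$ for every $W$, so both sums vanish; in case (b) with $d$ odd, the $J_d$-sum equals $-1$ times the $I_d$-sum. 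This accounts for all the sign and vanishing assertions, and it remains to evaluate $\sum_{\dim W=\ell}h_{I_d|_W}$.

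\emph{Step 3 (stratify by the radical and telescope).} Write $R=W\cap W^\perp$ and $e=\dim R$; then $R$ is totally isotropic, $R\subseteq W\subseteq R^\perp$, and $W$ is precisely the preimage of a nondegenerate $(\ell-e)$-dimensional subspace of $R^\perp/R$. As $R$ runs over the $r^*(I_d,0_e)/\bnu(e,0)$ totally isotropic $e$-dimensional subspaces of $(\F^d,I_d)$ (counted by Lemma~5.1), $R^\perp/R$ is a fixed nondegenerate $(d-2e)$-dimensional space whose type depends only on $d$ and $e$; in case (b) one further sorts its nondegenerate $(\ell-e)$-dimensional subspaces into $I$- and $J$-type, the relevant counts again coming from Lemma~5.1 and its iterated rank-one formulas. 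Substituting these counts together with the signs of $h_Y$, the sum $\sum_{\dim W=\ell}h_{I_d|_W}$ becomes an explicit alternating sum over $e$ (equivalently over $b=k-\lceil e/2\rceil$) of products of $\bmu$, $\bdelta$, $\bnu$ and powers of $N(\calP)$ and $\varepsilon$, which I would collapse by induction — exactly the telescoping used throughout \S3 and in the proof of Lemma~5.2 — via $\bgamma(t,k)=N(\calP)^{2k}\bgamma(t-1,k)+\bgamma(t-1,k-1)$, $\bbeta(t,s)=N(\calP)^s\bbeta(t-1,s)+\bbeta(t-1,s-1)$ and $(N(\calP)^k+1)\bdelta(k-1,t-s)=\bdelta(k,t-s+1)$. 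This should produce $(-1)^k\bgamma(c,k)$ in case (a) and $(-1)^k\varepsilon^cN(\calP)^{c-2k}\bgamma(c,k)$ in case (b) with $d=2c+1$; running the same computation with $J_d$ in place of $I_d$ gives the same value in (a), the stray $\pm\varepsilon^t$ factors in the intermediate Lemma~5.1 formulas cancelling in the final tally.

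\emph{Main obstacle.} The structural part is short; the real work is the bookkeeping in Step~3 — arranging the many powers of $N(\calP)$ and $\varepsilon$ so that the alternating sum (whose signs are exactly those built into $h_Y$) telescopes to the clean closed forms. Extra care will be needed at the two boundary indices $b=k$ (restricted form of full rank, empty product in $h_Y$) and $e=0$ (nondegenerate $W$), and in checking that the parity split $d=2c$ versus $d=2c+1$, which changes the type of $R^\perp/R$, leaves the answer in (a) unchanged.
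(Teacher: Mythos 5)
Your overall strategy is sound and, modulo packaging, is the same as the paper's: reinterpret $r^*(I_d,Y)/o(Y)$ as the number of $\ell$-dimensional subspaces whose restricted form lies in $\cls Y$, reduce the lemma to an alternating sum over the rank and type of that restricted form weighted by the representation numbers of Lemma 5.1, and collapse that sum via the recursion $\bgamma(t,k)=N(\calP)^{2k}\bgamma(t-1,k)+\bgamma(t-1,k-1)$. Your Step 2 is a genuinely cleaner route to the sign and vanishing assertions: the paper instead reads identities such as $r^*(I_d,I_{2b+1}\perp 0_s)=r^*(I_d,J_{2b+1}\perp 0_s)$ ($d$ even) and $r^*(J_d,I_{2b+1}\perp 0_s)=r^*(I_d,J_{2b+1}\perp 0_s)$ ($d$ odd) off Lemma 5.1, whereas your $\omega$-scaling isomorphism explains them conceptually. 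Do note that the scaling symmetry is silent in case (a) with $d=2c$ (there $\omega I_{2c}\simeq I_{2c}$ and $\omega J_{2c}\simeq J_{2c}$), so the equality of the $I_d$- and $J_d$-sums in that case must come out of the explicit computation, with the stray $\pm\varepsilon^c$ factors cancelling only at the end — you acknowledge this but it is another piece of unexecuted bookkeeping.

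The genuine gap is that Step 3, which carries essentially all of the content of the lemma, is never carried out: you stratify by the radical, name the recursions you intend to use, and then assert that the resulting alternating sum ``should produce'' $(-1)^k\bgamma(c,k)$, resp.\ $(-1)^k\varepsilon^cN(\calP)^{c-2k}\bgamma(c,k)$. Until that sum is written down explicitly and telescoped, the closed forms are unverified, and this is precisely where the difficulty lies. For comparison, the paper assembles, for each $b$, combinations such as
$$r^*(I_{2b+1},1)\,r^*(I_d,I_{2b}\perp 0_{2(k-b)})+r^*(J_{2b+1},1)\,r^*(I_d,J_{2b}\perp 0_{2(k-b)}),$$
divides by the orders $o(I_a\perp 0_{\ell-a})$, and reduces everything to the single sum
$S(k,c)=\sum_{b=0}^k(-1)^bN(\calP)^{2b(b+c-2k)}\bmu\bdelta(c-k,k-b)\bgamma(k,b)$,
which the $\bgamma$ recursion collapses to $(-1)^k$; producing the exponent $2b(b+c-2k)$ and the factor $\bmu\bdelta(c-k,k-b)$ correctly is exactly the work you defer. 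To complete the proof you must supply this computation in each of the cases your symmetry argument does not dispose of: both the $I_d$- and $J_d$-sums in case (a) with $d$ even, one of them in case (a) with $d$ odd, and the $I_d$-sum in case (b) with $d=2c+1$.
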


\begin{proof}
We have 
\begin{align*}
&\sum_{\cls Y\in\F^{\ell,\ell}_{\sym}}\frac{r^*(I_d,Y)}{o(Y)\,}h_Y
=\frac{r^*(I_d,0_{\ell})}{o(0_{\ell})}\\
&\quad
+\sum_{a=1}^{\ell}\left(\frac{r^*(I_d,I_a\perp 0_{\ell-a})}{o(I_a\perp0_{\ell-a})}
\,h_{I_a\perp 0_{\ell-a}}
+\frac{r^*(I_d,J_a\perp 0_{\ell-a})}{o(J_a\perp0_{\ell-a})}
\,h_{J_a\perp 0_{\ell-a}}\right).
\end{align*}
\begin{align*}
&o(I_{2b+1})N(\calP)^{2bs}\bnu(s,0)=N(\calP)^{2b}\bmu(s,1)o(I_{2b+1}\perp 0_{s-1})\\
&\quad=r^*(I_{2b+1},1)o(I_{2b}\perp0_s)
=r^*(J_{2b+1},1)o(J_{2b}\perp0_s).
\end{align*}

(a)  Suppose that $\ell=2k$.  
Then using Lemma 5.1, when $d=2c$ we get
\begin{align*}
&r^*(I_{2b+1},1)r^*(I_d,I_{2b}\perp 0_{2(k-b)})
+r^*(J_{2b+1},1)r^*(I_d,J_{2b}\perp 0_{2(k-b)})\\
&\quad=
2N(\calP)^{(k-b)(2k-2b-1)+2cb-b^2}\\
&\qquad\cdot
\bmu\bdelta(c-1,2k-b-1)(N(\calP)^c-\varepsilon^c)(N(\calP)^{c-2(k-b)}+\varepsilon^c)
\end{align*}
and
\begin{align*}
&N(\calP)^{2b}(N(\calP)^{2(k-b)}-1)\\
&\qquad\cdot
\left(r^*(I_d,I_{2b+1}\perp 0_{2(k-b)-1})
+r^*(I_d,J_{2b+1}\perp 0_{2(k-b)-1})\right)\\
&\quad=
2N(\calP)^{(k-b)(2k-2b-1)+2cb-b^2+c-2(k-b)}\\
&\qquad\cdot
\bmu\bdelta(c-1,2k-b-1)(N(\calP)^c-\varepsilon^c)(N(\calP)^{2(k-b)}-1).
\end{align*}
So using that $\bmu\bdelta(t,s+s')=\bmu\bdelta(t,s)\bmu\delta(t-s,s'),$ we have
\begin{align*}
\sum_{\cls Y\in\F^{\ell,\ell}_{\sym}}\frac{r^*(I_{2c},Y)}{o(Y)}\,h_Y
&=
\sum_{b=0}^k(-1)^b\frac{N(\calP)^{2b(b+c-2k)}\bmu\bdelta(c,2k-b)\bmu\bdelta(k,b)} {\bmu\bdelta(b,b)\bmu\bdelta(k-b,k-b)\bmu\bdelta(k,b)}\\
&=
\bgamma(k,c)\cdot S(k,c)
\end{align*}
where
$$S(k,c)=\sum_{b=0}^k(-1)^b N(\calP)^{2b(b+c-2k)}
\bmu\bdelta(c-k,k-b)\bgamma(k,b).$$
Since $\bgamma(k,b)=N(\calP)^{2b}\bgamma(k-1,b)+\bgamma(k-1,b-1),$
we find that
\begin{align*}
S(k,c)
&=
-S(k-1,c-1)=(-1)^k S(0,c-k)=(-1)^k,
\end{align*}
proving one case of (a).
We follow this same line of argument when replacing $I_{2c}$ by $J_{2c}$, and when replacing $2c$ by $2c+1$.

(b)  Suppose that $\ell=2k+1$.  
Using the definition of $h_Y$, we have
\begin{align*}
\sum_{\cls Y\in\F^{\ell,\ell}_{\sym}}
\frac{r^*(I_d,Y)}{o(Y)}\, h_Y
&=\sum_{b=0}^k
\left(r^*(I_d,I_{2b+1}\perp0_{2(k-b)}) - r^*(I_d,J_{2b+1}\perp0_{2(k-b)})\right)\\
&\quad\cdot
\frac{h_{I_{2b+1}\perp0_{2(k-b)}}}{o(I_{2b+1}\perp0_{2(k-b)})}.
\end{align*}
When $d$ is even, $r^*(I_d,I_{2b+1}\perp 0_{2(k-b)})
=r^*(I_d,J_{2b+1}\perp0_{2(k-b)}),$
so when $d$ is even the above sum on $\cls Y$ is 0.  So suppose that $d=2c+1$.
Then with $S(k,c)$ as in case (a), we have
\begin{align*}
\sum_{\cls Y\in\F^{\ell,\ell}_{\sym}}
\frac{r^*(I_d,Y)}{o(Y)}\, h_Y
&=\varepsilon^c N(\calP)^{c-2k} \bgamma(c,k)
S(c,k)\\
&=(-1)^k\varepsilon^c N(\calP)^{c-2k} \bgamma(c,k).
\end{align*}

To evaluate the sum on $\cls Y$ when $I_d$ is replaced by $J_d$, we first note that
for any $s\ge0$, when $d$ is even we have 
$r^*(J_d,I_{2b+1}\perp 0_{s})=r^*(J_d,J_{2b+1}\perp0_{s})$, and
when $d$ is odd we have
$r^*(J_d,I_{2b+1}\perp 0_{s})=r^*(I_d,J_{2b+1}\perp0_{s})$ .  
So mimicking our above analysis, we find that when $d$ is even, the sum on $\cls Y =0$, and when $d$ is odd with $d=2c+1$, 
we have
$$\sum_{\cls Y\in\F^{\ell,\ell}_{\sym}}
\frac{r^*(J_d,Y)}{o(Y)}\,h_Y
= -\sum_{\cls Y\in\F^{\ell,\ell}_{\sym}}
\frac{r^*(I_d,Y)}{o(Y)}\,h_Y.$$
\end{proof}

\end{document}